\documentclass[11pt, twoside]{article}

\usepackage[english]{babel}

\usepackage{amssymb}
\usepackage{mathrsfs}
\usepackage{amsmath}
\usepackage{amsthm}
\usepackage{amsfonts}
\usepackage{latexsym}
\usepackage{indentfirst}
\usepackage{color}
\usepackage{txfonts}
\usepackage{enumerate}

\usepackage{anysize}

\usepackage[colorlinks=true,
linkcolor=blue,
citecolor=red,
urlcolor=magenta,
backref=page
]{hyperref}

\allowdisplaybreaks

\hypersetup{pdftitle={Sharp Truncated Difference Estimates
for Nonlocal Nonconvex Functionals},
pdfauthor={Feng Dai, Dachun Yang, Wen Yuan and Yirui Zhao}}

\newtheorem{theorem}{Theorem}[section]
\newtheorem{lemma}[theorem]{Lemma}
\newtheorem{corollary}[theorem]{Corollary}
\newtheorem{proposition}[theorem]{Proposition}
\theoremstyle{definition}
\newtheorem{remark}[theorem]{Remark}

\numberwithin{equation}{section}

\begin{document}
\title{\bf\Large Sharp Truncated Difference Estimates for Nonlocal Nonconvex Functionals
Defined by Superlevel Sets\footnotetext{\hspace{-0.35cm}2020 
\emph{Mathematics Subject Classification}.
Primary 26D10; Secondary 35A23, 46E35, 26B30.
\endgraf\emph{Key words and phrases}.
Poincar\'e inequality, nonlocal nonconvex functional, Sobolev space, bounded variation,
compactness, vanishing mean oscillation.
\endgraf
This project is partially supported by the National
Natural Science Foundation of China
(Grant Nos. 12431006, 12371093, and 125B2008),
the Beijing Natural Science Foundation (Grant No. 1262011),
and the Fundamental Research Funds
for the Central Universities (Grant No. 2253200028).
The first author is also supported by the
NSERC of Canada Discovery Grant (Grant No. RGPIN 2026-05667).}}
\author{Feng Dai, Dachun Yang,
Wen Yuan and Yirui Zhao}
\maketitle
\vspace{-0.7cm}
\begin{center}
\begin{minipage}{13cm}
{\small {\bf Abstract.}\quad
Let $N\in\mathbb N$ and $p\in[1,\infty)$.
In this article, we consider a family
$\{\Phi_{\lambda,p}^{\gamma}\}_{\lambda\in(0,\infty)}$
of nonlocal nonconvex functionals
with a real parameter $\gamma$
and establish two truncated difference estimates bounded by these functionals
with the sharp range of $\gamma$; that is, for any $\gamma\in(-\infty,-1]$,
any $\lambda\in(0,\infty)$, any cube $Q\subset\mathbb R^N$,
and any measurable function $g:Q\to\mathbb R$,
\begin{align*}
&\int_Q\int_Q
\left[\frac{|g(x)-g(y)|-\lambda|x-y|^{1+\gamma/p}}{\ell(Q)^{1+\gamma/p}}\right]_+^p
\,dx\,dy\\
&\quad\lesssim \lambda^p
\mathop{\int_Q\int_Q}\limits_{
\frac{|g(x)-g(y)|}{|x-y|^{1+\gamma/p}}>\lambda}
\left[\frac{|x-y|}{\ell(Q)}\right]^{\gamma-N}\,dx\,dy
=:\frac{\Phi_{\lambda,p}^{\gamma}(g;Q)}{\ell(Q)^{\gamma-N}},
\end{align*}
where the implicit positive
constant depends only on $N$, $p$, and $\gamma$.
For $\gamma\in(0,\infty)$, the above estimate holds with the
left-most integrand replaced by
$\frac{\min\{|g(x)-g(y)|^p,\lambda^p|x-y|^{p+\gamma}\}}{\ell(Q)^{p+\gamma}}$.
As applications, these estimates yield related nonlocal Poincar\'e inequalities
with the sharp range of $\gamma$, and also recover the known $\Gamma$-liminf lower bounds and
characterizations of Sobolev and BV spaces in terms of
the liminf of the functionals $\{\Phi^\gamma_{\lambda,p}\}_{\lambda\in(0,\infty)}$
for all $\gamma\in\mathbb{R}\setminus(-1,0]$. We investigate
further applications to compactness and variational
problems as well as a new VMO criterion.
Inspired by a recent work of R. L. Frank, we develop an
elementary approach based on
dyadic iteration of truncated differences along line segments,
which allows us to treat the aforementioned problems in a unified way.
}
\end{minipage}
\end{center}
\vspace{0.1cm}

\tableofcontents

\section{Introduction}

The Poincar\'e inequality plays a fundamental role in the study of
Sobolev spaces and variational problems. On a bounded connected Lipschitz
domain $\Omega\subset\mathbb R^N$, it states that, for any
$p\in[1,\infty)$ and $u\in W^{1,p}(\Omega)$,
\begin{equation}\label{intro:eq:classical-poincare}
\int_\Omega|u-u_\Omega|^p\,dx
\lesssim\int_\Omega|\nabla u|^p\,dx,
\end{equation}
where $u_\Omega:=\frac{1}{|\Omega|}\int_\Omega u\,dx$,
$W^{1,p}(\Omega)$ denotes the Sobolev space of functions in
$L^p(\Omega)$ whose distributional gradient belongs to
$L^p(\Omega;\mathbb R^N)$, and the implicit positive constant depends
only on $N$, $p$, and $\Omega$; see, for instance,
\cite[Theorem~13.27]{Leoni}.

Ponce \cite[Theorem~1.1]{Ponce2004} established a variant of
the Poincar\'e inequality
based on the famous formulae of Bourgain et al.
\cite{BBM2001}.
Specifically, when $N\geq2$ and $\rho_n$ are nonnegative radial kernels
of unit mass concentrating at the origin, then, for any $n\geq n_0$ and
$u\in L^p(\Omega)$,
\begin{equation}\label{intro:eq:ponce-poincare}
\int_\Omega|u-u_\Omega|^p\,dx
\lesssim\iint_{\Omega\times\Omega}
\frac{|u(x)-u(y)|^p}{|x-y|^p}\rho_n(x-y)\,dx\,dy,
\end{equation}
where the implicit positive constant depends
only on $N$, $p$, and $\Omega$, and where $n_0$ may also depend on $\rho_n$.
Special cases of \eqref{intro:eq:ponce-poincare} were used by Bourgain et al. \cite{BBM2004} in their analysis of the
Ginzburg--Landau model, particularly in identifying the limit of
minimizers.

A different approximation of the gradient energy was introduced by
Nguyen \cite{Nguyen2006} and further studied by Bourgain and Nguyen
\cite{BourgainNguyen2006}. For $p\in[1,\infty)$, $\lambda\in(0,\infty)$,
and a measurable function $u$ on an open set
$U\subset\mathbb R^N$, consider
$$
I_{\lambda,p}(u;U)
:=\lambda^p\iint_{\{(s,t)\in U\times U:\,|u(s)-u(t)|>\lambda\}}
\frac{dx\,dy}{|x-y|^{N+p}}.
$$
For $p\in(1,\infty)$, Nguyen \cite[Theorem~2]{Nguyen2006} proved that
$I_{\lambda,p}(u;\mathbb R^N)$ converges, as $\lambda\to0^+$, to a
constant multiple of the Sobolev energy for $u\in W^{1,p}(\mathbb R^N)$.
Bourgain and Nguyen \cite{BourgainNguyen2006} characterized Sobolev spaces
among functions in $L^p(\mathbb R^N)$ by the condition
$\varliminf_{\lambda\to0^+}I_{\lambda,p}(u;\mathbb R^N)<\infty$,
and further characterizations can also be found in Nguyen \cite{Nguyen2008}.
Here, and thereafter, $\varliminf$ and $\varlimsup$ denote the \emph{limit
inferior} and the \emph{limit superior}, respectively.

Following these results, Nguyen \cite[Theorem~1]{Nguyen2011} established
a variant of the Poincar\'e inequality involving $I_{\lambda,p}$. Nguyen showed that,
for any given $p\in[1,\infty)$ and for any $\lambda\in(0,\infty)$,
any ball $B\subset\mathbb R^N$, and any measurable $u:B\to\mathbb R$,
\begin{equation}\label{intro:eq:nguyen-poincare}
\fint_B\fint_B|u(x)-u(y)|^p\,dx\,dy
\lesssim\lambda^p+r_B^{p-N}I_{\lambda,p}(u;B),
\end{equation}
where $\fint_B$ denotes the integral average over $B$, $r_B$
denotes the radius of $B$, and the implicit
positive constant depends only on $N$ and $p$.

Note that the functional $I_{\lambda,p}$ is nonconvex, unlike the functionals in
\eqref{intro:eq:ponce-poincare}. This creates a basic difficulty for associated minimization
problems because the usual argument based on weak compactness and
convexity is no longer available. Brezis and Nguyen
\cite{BrezisNguyenImages} investigated the existence of minimizers and
the $\Gamma$-convergence for a broader class of nonlocal nonconvex
approximations of the total variation arising in image denoising.

The Poincar\'e inequalities \eqref{intro:eq:ponce-poincare} and
\eqref{intro:eq:nguyen-poincare} provide a bridge between
mean oscillations and nonlocal energies,
without requiring Sobolev smoothness in advance.
Thus, they may serve as alternatives to the classical Poincar\'e inequalities
on metric measure spaces.
For further study on these inequalities,
we refer to
\cite{BrezisNguyenVMO,BrezisNguyenImages,MallickNguyen2020,MallickNguyenCoulomb2022,Nguyen2011,NguyenDegree2017,
NguyenSquassinaLog2017,NguyenSquassina2019,VanSchaftingenGap2020,VanSchaftingenLifting2025}.

Recently, Brezis et al.\ \cite{BSVY,BVY2021} obtained new weak-type
representations of Sobolev seminorms by introducing a family of nonlocal
functionals with a real parameter $\gamma$.
To describe these functionals, for $\lambda\in(0,\infty)$,
$b\in\mathbb R$, and
a measurable function $u:U\to\mathbb R$ on a measurable set $U\subset\mathbb R^N$, let
$$
E_{\lambda,b}[u;U]
:=\left\{(x,y)\in U\times U:x\ne y,\ \frac{|u(x)-u(y)|}{|x-y|^{1+b}}>\lambda\right\}.
$$
For $p\in[1,\infty)$ and $\gamma\in\mathbb R$, define
\begin{equation}\label{app:eq:energy}
\Phi_{\lambda,p}^\gamma(u;U)
:=\lambda^p\iint\nolimits_{E_{\lambda,\frac{\gamma}{p}}[u;U]}
|x-y|^{\gamma-N}\,dx\,dy.
\end{equation}
Brezis et al.\ \cite{BSVY} proved
that, for any $\gamma\in\mathbb R\setminus\{0\}$ and
$u\in C_{\rm c}^\infty(\mathbb R^N)$,
\begin{equation*}
\lim_{\lambda\to L_\gamma}\Phi_{\lambda,p}^\gamma(u;\mathbb R^N)
=\frac{K_{N,p}}{|\gamma|}\int_{\mathbb R^N}|\nabla u|^p\,dx,
\end{equation*}
where
\begin{align}\label{1.4x}
K_{N,p}:=\int_{\mathbb S^{N-1}}|\omega_1|^p\,d\sigma(\omega),
\end{align}
with $\sigma$ denoting surface measure on the unit sphere $\mathbb S^{N-1}$,
and where $\lambda\to L_\gamma$ means $\lambda\to0^+$ when
$\gamma\in(-\infty,0)$ and $\lambda\to\infty$ when $\gamma\in(0,\infty)$.
For smooth functions, the preceding limit formula when $\gamma=-p$ and
$p\in(1,\infty)$ reduces to \cite[Theorem~2(c)]{Nguyen2006},
and when $\gamma=N$ to the formula in \cite[Theorem~1.2]{BVY2021}.
Brezis et al.\ \cite[Section~7B]{BSVY} further asked whether
$\varliminf_{\lambda\to L_\gamma}\Phi_{\lambda,p}^\gamma(u;\mathbb R^N)$
controls the Sobolev energy or total variation for any $u\in L^1_{\rm loc}(\mathbb R^N)$.

For some developments related to this question,
we refer to
\cite{AntonucciEtAl,Frank2024,GobbinoPicenni,Lahti2025,LahtiLi2026,LinYangYangYuanZhang,LinEtAlFullRange2026,Picenni2024}.
For further applications involving these families of nonlocal
functionals, we refer to
\cite{BVYLorentz2021,DaiEtAlBall2023,DominguezEtAlBesov2023,
Poliakovsky2022,HuEtAlHigherOrder2025,LiEtAlWeighted2026,DaiEtAlMetric2022,
HytonenYangYuanZhao2026,DaiEtAlOperators2026,DominguezMilman}.

In his recent survey, Nguyen \cite[Open Question~33]{NguyenSurvey}
asked whether \eqref{intro:eq:nguyen-poincare} extends to the general
family $\Phi_{\lambda,p}^\gamma$ under a positivity assumption on its
cell constant. For $Q_0:=(0,1)^N$ and
$\gamma\in\mathbb R\setminus\{0\}$, this \emph{cell constant} is defined as
\begin{equation}\label{eq:cell-constant}
\kappa_{N,p,\gamma}
:=\inf_{\{h_\lambda\}}
\left[\varliminf_{\lambda\to L_\gamma}
\Phi_{\lambda,p}^\gamma(h_\lambda;Q_0)\right],
\end{equation}
where the infimum is taken over all families of measurable functions
$h_\lambda:Q_0\to\mathbb R$ converging in measure on $Q_0$ to
$h_0(x):=\frac{x_1+\cdots+x_N}{\sqrt N}$ as $\lambda\to L_\gamma$;
see \cite[Section~8, (94)]{NguyenSurvey}.
The cell constant $\kappa_{N,p,\gamma}$ is the conjectured coefficient
of the local energy in the $\Gamma$-limit of \eqref{app:eq:energy}; see
\cite[Conjecture~28]{NguyenSurvey}.

In this article, we study the above two questions of Brezis et al.\ and Nguyen
in a unified framework based on the following truncated difference estimates.

\begin{theorem}\label{thm:truncated-differences}
Let $N\in\mathbb N$ and $p\in[1,\infty)$, and let $Q\subset\mathbb R^N$
be a cube. For any $\lambda\in(0,\infty)$ and any
measurable function $g:Q\to\mathbb R$, the following statements hold.
\begin{enumerate}[\rm(i)]
\item If $\gamma\in(-\infty,-1]$, then
\begin{equation}\label{eq:truncation-negative}
\fint_Q\fint_Q
\left(|g(x)-g(y)|-\lambda|x-y|^{1+\frac{\gamma}{p}}\right)_+^p
\,dx\,dy
\lesssim\ell(Q)^{p-N}\Phi_{\lambda,p}^{\gamma}(g;Q).
\end{equation}
\item If $\gamma\in(0,\infty)$, then
\begin{equation}\label{eq:truncation-positive}
\fint_Q\fint_Q
\min\left\{|g(x)-g(y)|^p,\lambda^p|x-y|^{p+\gamma}\right\}
\,dx\,dy
\lesssim\ell(Q)^{p-N}\Phi_{\lambda,p}^{\gamma}(g;Q).
\end{equation}
\end{enumerate}
Here, $\ell(Q)$ is the edge length of $Q$, and the implicit positive
constants depend only on $N$, $p$, and $\gamma$.
\end{theorem}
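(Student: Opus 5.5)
By the dilation $x\mapsto x_Q+\ell(Q)x$, which replaces $\lambda$ by $\lambda\ell(Q)^{1+\gamma/p}$ and multiplies both sides by the same power of $\ell(Q)$, it suffices to treat $Q=Q_0=(0,1)^N$. Write $\beta:=1+\gamma/p$, and let $B:=E_{\lambda,\gamma/p}[g;Q_0]$ be the set of bad pairs $(x,y)$, that is, those with $|g(x)-g(y)|>\lambda|x-y|^{\beta}$. Following the idea attributed to Frank, I would work along line segments. For a pair $(x,y)$ with $h:=y-x$, I would iterate over the dyadic subdivision points $x+j2^{-k}h$, $0\le j\le 2^k$. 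Averaging over all $(x,y)\in Q_0\times Q_0$ and using the change of variables $(x,h)\mapsto(x+j2^{-k}h,\,2^{-k}h)$, quantities on the $k$-th generation subsegments become integrals over pairs at distance $2^{-k}|x-y|$. These are exactly the scales weighted by $|x-y|^{\gamma-N}$ in $\Phi^\gamma_{\lambda,p}$.

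\emph{Part (i), $\gamma\le-1$.} Here $\beta\le 1-1/p<1$. Put $T(x,y):=(|g(x)-g(y)|-\lambda|x-y|^\beta)_+$ and let $m$ be the midpoint of $[x,y]$. The elementary inequality $(a+b-c)_+\le(a-c_1)_++(b-c_2)_++(c_1+c_2-c)_+$, applied with $c_1=c_2=\lambda(|h|/2)^\beta$, gives
$$T(x,y)\le T(x,m)+T(m,y)+(2^{1-\beta}-1)\lambda|h|^\beta\mathbf 1_B(x,y),$$
because $T(x,y)=0$ off $B$. Iterating $k$ times expresses $T(x,y)$ as $\sum_{j<k}\sum_{\text{gen-}j\text{ pieces }I}c_\beta\lambda|I|^\beta\mathbf 1_B(I)$ plus $\sum_{\text{gen-}k\text{ pieces}}T$. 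I would then take $p$-th powers with a weighted H\"older inequality, weights $2^{j\varepsilon}$ over the generations, and the standard $2^{j(p-1)}$ loss over the $2^j$ pieces of generation $j$. After averaging and changing variables as above, the generation-$j$ term is bounded by
$$\lambda^p\,2^{j[(1-\beta)p+\varepsilon']}\iint_{B,\ |x'-y'|\sim 2^{-j}|x-y|}dx'\,dy'.$$
Comparing with the weight $|x'-y'|^{\gamma-N}=|x'-y'|^{(\beta-1)p-N}$ shows that the powers of $2^{j}$ match, up to the $\varepsilon'$ loss and the Jacobian. The boundary term $\sum_{\text{gen-}k}T$ must be shown to vanish in average as $k\to\infty$. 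For that I would first reduce to $g$ bounded, by truncating $g$ at levels $\pm M$ (which does not enlarge $B$) and letting $M\to\infty$ by monotone convergence. For such $g$, I would use continuity of translations in $L^1$ together with the fact that each small piece contributes at most $|g(x_j)-g(x_{j+1})|$ minus a threshold that is large relative to the difference, since $|I|^\beta\gg|I|$ when $\beta<1$.

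\emph{Part (ii), $\gamma>0$.} Here $\beta>1$ and the thresholds are superadditive in the wrong direction for the above argument. I would instead use $S(x,y):=\min\{|g(x)-g(y)|,\lambda|x-y|^\beta\}$, which is subadditive up to the bad-set indicator: $S(x,y)\le \lambda|h|^\beta\mathbf 1_B(x,y)$ trivially, and if $(x,y)\notin B$ then $S(x,y)\le S(x,m)+S(m,y)$. Rather than refining dyadically toward small scales, I would chain in the other direction, toward scales comparable to $|x-y|$. The point is that $\Phi^\gamma_{\lambda,p}$ with $\gamma>0$ weights large separations more heavily, so it suffices to estimate $\fint\fint S^p$ by the bad-set measure at scales $\gtrsim|x-y|$, using a geometric sequence of intermediate points and the convergent sum $\sum_k 2^{-k\beta}$.

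The main obstacle is the $\varepsilon$-loss in part (i): the H\"older step across generations costs a factor $2^{j\varepsilon}$, and the sharp endpoint $\gamma=-1$ leaves no room to absorb it. My first attempt would be to choose the H\"older weights adapted to $2^{j(1-\beta)}$ itself. Since $1-\beta=-\gamma/p\ge 1/p$, the generation sums form a geometric series, and the convexity inequality $\bigl(\sum a_j\bigr)^p\le C\sum 2^{j\delta(p-1)}a_j^p$ with $\delta<1-\beta-(p-1)/p$ should suffice. That constraint is exactly where $\gamma\le-1$, i.e.\ $(1-\beta)p\ge1$, ought to enter. If this bookkeeping fails at the endpoint, I would replace the dyadic midpoint by a random intermediate point averaged over a subsegment.
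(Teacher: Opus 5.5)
Your recursion for $T$ in part (i) is correct; it is the unnormalized form of Lemma~\ref{lem:dyadic-truncation}(i). However, the two steps that carry the proof are missing, and in part (ii) the approach goes the wrong way.

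\textbf{Part (i): summation over generations.} This step is not resolved. An inequality $(\sum_j X_j)^p\le C\sum_j w_jX_j^p$ valid for arbitrary $X_j\ge0$ forces $w_j\to\infty$. After $z=2^{-j}h$, a weight $2^{j\delta(p-1)}$ becomes an extra factor $|z|^{-\delta(p-1)}$, which $\Phi^\gamma_{\lambda,p}$ cannot absorb. Your constraint $0<\delta<1-\beta-\frac{p-1}{p}$ also amounts to $\gamma<1-p$, not $\gamma\le-1$.

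The missing observation is to write the main term as $\int_0^1\sum_{j<k}(q-1)\lambda|h|^\beta q^j\varepsilon_j(t)\,dt$. Here $q:=2^{1-\beta}=2^{-\gamma/p}>1$, and $\varepsilon_j(t)\in\{0,1\}$ indicates whether the generation-$j$ piece containing the parameter $t$ is bad. A geometric sum with $0/1$ coefficients is at most $\frac{q}{q-1}$ times its largest term. Hence $\bigl(\sum_jq^j\varepsilon_j\bigr)^p\le\frac{q^p}{(q-1)^p}\sum_jq^{jp}\varepsilon_j$ with no loss (Lemma~\ref{lem:geometric-summation}). Jensen in $t$, the change of variables $z=2^{-j}h$, and $\sum_{2^{-j}\gtrsim|z|}2^{(N+p)j}\lesssim|z|^{-N-p}$ then finish this step. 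This works for every $\gamma<0$, so $\gamma\le-1$ is not used here.

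\textbf{Part (i): the remainder.} This is where $\gamma\le-1$ is used, and your sketch does not control it. Let $M(z):=|\{x:(x,x+z)\in B\}|$ and let $g$ be bounded. The generation-$k$ remainder, integrated over $x$ and $h$, is $\lesssim\|g\|_{L^\infty}2^{(N+1)k}\int_{|z|\lesssim2^{-k}}M(z)\,dz\lesssim\|g\|_{L^\infty}2^{(1+\gamma)k}\int_{|z|\lesssim2^{-k}}|z|^{\gamma-N}M(z)\,dz$. This tends to $0$ precisely because $\gamma\le-1$ and $\Phi^\gamma_{\lambda,p}(g;Q_0)<\infty$. Continuity of translations gives only $\|\Delta_zg\|_{L^1}=o(1)$, whereas $o(|z|)$ is needed. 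The comparison $|I|^\beta\gg|I|$ holds for all $\gamma<0$, including $\gamma\in(-1,0)$. There the statement is false for $g=\mathbf 1_{(1/4,3/4)^N}$: every segment crossing the boundary keeps a generation-$k$ piece with jump $1$ above the vanishing threshold, so the remainder does not vanish.

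\textbf{Part (ii).} The claimed subadditivity off $B$ is false. With $\beta=2$ and $\lambda|h|^\beta=1$, take $g(x)=0$, $g(y)=\frac{9}{10}$, $g(m)=100$. Then $(x,y)\notin B$ and $S(x,y)=\frac{9}{10}$, but $S(x,m)+S(m,y)=\frac12$. The correct recursion charges the error to the bad halves: $S(x,y)\le S(x,m)+S(m,y)+\lambda|h|^\beta[\mathbf 1_B(x,m)+\mathbf 1_B(m,y)]$. This also covers $(x,y)\in B$, because $2^{1-\beta}<1$ forces a bad half.

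More seriously, chaining toward scales $\gtrsim|x-y|$ cannot work. For $g(x)=x_1$ there are no bad pairs at distance $\ge\lambda^{-p/\gamma}$. Yet the left-hand side of \eqref{eq:truncation-positive} on $Q_0$ tends to $\frac{2}{(p+1)(p+2)}$ as $\lambda\to\infty$. Almost all of it comes from pairs with $|x-y|\ge\lambda^{-p/\gamma}$, and for those there is nothing at larger scales to charge.

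As the paper does, you must refine dyadically toward small scales here too. The generation-$k$ remainder is then at most $2^k\lambda(2^{-k}|h|)^\beta\to0$ since $\beta>1$; neither boundedness of $g$ nor finiteness of $\Phi^\gamma_{\lambda,p}$ is needed. The main term $2\lambda|h|^\beta\int_0^1\sum_{j\ge0}q^j\varepsilon_{j+1}(t)\,dt$, now with $q=2^{-\gamma/p}<1$, is handled by the decreasing version of the same $0/1$ lemma. There the sum is at most $\frac{1}{1-q}$ times its first nonzero term, and the same change of variables finishes.
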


\begin{remark}\label{rem:truncation-sharpness}
The ranges of $\gamma$ in Theorem~\ref{thm:truncated-differences}
are sharp: \eqref{eq:truncation-negative} fails for
$\gamma\in(-1,\infty)$, and \eqref{eq:truncation-positive} fails for
$\gamma\in(-\infty,0]$, even on the unit cube.
The proof is given in Subsection~\ref{sec:failure}.
\end{remark}

The quantities on the left-hand sides of \eqref{eq:truncation-negative}
and \eqref{eq:truncation-positive} can be understood intuitively through the identity
$$
(s-a)_++\min\{s,a\}=s,\quad s,a\in[0,\infty).
$$
The two complementary estimates control the excess of the difference
above the threshold when $\gamma\in(-\infty,-1]$ and the difference
capped at the threshold when $\gamma\in(0,\infty)$.
When $\lambda\to L_\gamma$, both truncations recover the full difference.

As applications,
Theorem~\ref{thm:truncated-differences} gives the nonlocal Poincar\'e inequality
and the $\Gamma$-liminf lower bound in a relatively transparent way.
We present these details in Sections \ref{sec:main-theorem-completion}
and \ref{sec:gamma-applications}.
We first record the complete classification for the Poincar\'e inequality
involving $\Phi^\gamma_{\lambda,p}$.

\begin{corollary}\label{thm:classification}
Let $N\in\mathbb N$, $p\in[1,\infty)$, and $\gamma\in\mathbb R$,
and let $Q\subset\mathbb R^N$ be a cube.
For $\lambda\in(0,\infty)$ and a measurable function
$g:Q\to\mathbb R$, consider the inequality
\begin{equation}\label{1-1}
\fint_Q\fint_Q|g(x)-g(y)|^p\,dx\,dy
\leq c_{N,p,\gamma}\left[\lambda^p\ell(Q)^{p+\gamma}
+\ell(Q)^{p-N}\Phi_{\lambda,p}^\gamma(g;Q)\right].
\end{equation}
Then the following assertions hold.

\begin{enumerate}[\rm(i)]
\item If $\gamma\in(-1,\infty)$,  then there is no finite constant
$c_{N,p,\gamma}$ for which \eqref{1-1} holds simultaneously for all
$\lambda\in(0,\infty)$ and all measurable $g:Q\to\mathbb R$.
Moreover, if $\gamma\in(0,\infty)$ and $g\in L^p(Q)$, then
\eqref{1-1} holds with a finite constant independent of
$\lambda\in(0,\infty)$ if and only if $g$ is constant almost
everywhere on $Q$.

\item If   $\gamma\in (-\infty, -1]$, then there exists a constant
$c_{N,p,\gamma}\in(0,\infty)$ depending only on $N$, $p$, and $\gamma$
such that \eqref{1-1} holds for any $\lambda\in(0,\infty)$,
any cube $Q\subset\mathbb R^N$, and any measurable $g:Q\to\mathbb R$.

\item For any $M\in(1,\infty)$ and  $\gamma\in[-M,-1]$, the least  constant $c_{N,p,\gamma}$ in  \eqref{1-1} satisfies $$
 \frac{2\lvert \gamma\rvert }{(p+1)(p+2)K_{N,p}}\leq c_{N,p,\gamma}\leq c_{N,p,M}<\infty,
$$
where $K_{N,p}$ is defined as above and $c_{N,p,M}\in(0,\infty)$
depends only on $N$, $p$, and $M$.
\item For any $\gamma\in(-\infty,-1]$,
$0<\kappa_{N,p,\gamma}\leq \frac{K_{N,p}}{\lvert \gamma\rvert }$,
where $\kappa_{N,p,\gamma}$ is as in \eqref{eq:cell-constant}.
\end{enumerate}
\end{corollary}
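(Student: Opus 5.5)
Corollary~\ref{thm:classification} has four parts, and I would prove them one at a time, relying on Theorem~\ref{thm:truncated-differences} and Remark~\ref{rem:truncation-sharpness}.

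\emph{Part (ii).} For $a,s\ge0$ we have $s^p\le 2^{p-1}[(s-a)_+^p+a^p]$. Apply this with $s=|g(x)-g(y)|$ and $a=\lambda|x-y|^{1+\gamma/p}$, then average over $Q\times Q$. The first term is controlled by \eqref{eq:truncation-negative}. For the second, note that $1+\gamma/p$ may be negative. The average $\fint_Q\fint_Q|x-y|^{p+\gamma}\,dx\,dy$ is finite only when $p+\gamma>-N$, and then it is $\lesssim\ell(Q)^{p+\gamma}$. So the crude split works whenever $\gamma>-N-p$.

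For more negative $\gamma$ a different argument is needed. On the set where $|g(x)-g(y)|\le 2\lambda|x-y|^{1+\gamma/p}$, I would avoid the bound by $a^p$. Instead, use that $\Phi$ already counts pairs at scale comparable to $\ell(Q)$. When $\gamma\le-1$ and $|x-y|\ll\ell(Q)$, the threshold $\lambda|x-y|^{1+\gamma/p}$ is large, and pairs above it carry large weight $|x-y|^{\gamma-N}$. I would re-run the dyadic chaining from the proof of Theorem~\ref{thm:truncated-differences}(i), with the threshold capped at scale $\ell(Q)$. This replaces $a$ by $\lambda\ell(Q)^{1+\gamma/p}$ when $1+\gamma/p<0$. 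I expect this uniformity in very negative $\gamma$ to be the main obstacle. If the chaining cannot be adapted, I would first establish the result for $\gamma>-N-p$ and then treat the remaining range separately.

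\emph{Part (iii), upper bound.} The constant in (ii) depends on $\gamma$ only through constants that stay bounded on $[-M,-1]$. I would check this by tracking the dependence in the dyadic iteration.

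\emph{Part (iii), lower bound.} Test \eqref{1-1} on $Q_0=(0,1)^N$ with $g(x)=x_1$ and let $\lambda\to0^+$. The left side equals $\int_0^1\!\int_0^1|s-t|^p\,ds\,dt=\frac{2}{(p+1)(p+2)}$. The term $\lambda^p\ell^{p+\gamma}$ tends to $0$. Since $\gamma<0$, the known limit formula, localized to $Q_0$, gives $\Phi_{\lambda,p}^\gamma(x_1;Q_0)\to\frac{K_{N,p}}{|\gamma|}$. The localization holds because for linear $g$ the set $E_{\lambda,\gamma/p}$ consists of pairs with $|x-y|^{-\gamma/p}|\omega_1|>\lambda$, i.e.\ $|x-y|$ large, but $\gamma<0$ forces the relevant range to $|x-y|\lesssim$ ... . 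Rather than rely on the global formula, I would compute this limit directly for $g(x)=x_1$ on $Q_0$ in polar coordinates. Rearranging then gives $c\ge\frac{2|\gamma|}{(p+1)(p+2)K_{N,p}}$.

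\emph{Part (iv).} The upper bound on $\kappa_{N,p,\gamma}$ comes from choosing $h_\lambda=h_0$ and using the same limit computation; by rotation invariance, $h_0$ gives the same value as $x_1$. For positivity, take any family $h_\lambda\to h_0$ in measure. Apply \eqref{1-1} on $Q_0$ and truncate to handle integrability: replace $h_\lambda$ by $\max\{\min\{h_\lambda,\sqrt N\},0\}$, which does not increase $\Phi$. Fatou's lemma then gives
\[
0<\fint\fint|h_0(x)-h_0(y)|^p\le c\,\varliminf\Phi_{\lambda,p}^\gamma(h_\lambda;Q_0).
\]

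\emph{Part (i).} This follows from the counterexamples in Remark~\ref{rem:truncation-sharpness}. For $\gamma\in(-1,0]$, I would take a jump function $g=\mathbf 1_{\{x_1>1/2\}}$, or a scaled version of it. For $\gamma>0$ and $g\in L^p$, letting $\lambda\to0$ gives $\Phi\to0$, since $\Phi\le\lambda^p\times{}$the measure of the set weighted appropriately. In detail, $E_\lambda$ has the bound $\Phi\le\lambda^{p}\int\!\!\int|x-y|^{\gamma-N}<\infty$, which tends to $0$. Hence the left side is $0$ and $g$ is constant almost everywhere.
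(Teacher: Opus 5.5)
Your proposal has a genuine gap in part (ii), and it carries over to the upper bound in (iii).

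After your split $s^p\le 2^{p-1}[(s-a)_+^p+a^p]$ with $a=\lambda|x-y|^{1+\gamma/p}$, the leftover term is $2^{p-1}\lambda^p\fint_Q\fint_Q|x-y|^{p+\gamma}\,dx\,dy$.
\begin{itemize}
\item For $\gamma\le-N-p$ this term is infinite.
\item For $\gamma\in(-N-p,-1]$ it equals $\lambda^p\ell(Q)^{p+\gamma}$ times $\int_{Q_0}\int_{Q_0}|x-y|^{p+\gamma}\,dx\,dy$, and that factor blows up as $\gamma\downarrow-N-p$.
\end{itemize}
So your route proves \eqref{1-1} only for $\gamma\in(-N-p,-1]$. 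Even there, the constant is not bounded on $[-M,-1]$ once $M\ge N+p$, so your claim in (iii) that it stays bounded is false for this argument. Tracking constants in the dyadic iteration cannot fix this, because the blow-up comes from the $a^p$ term, not from \eqref{eq:truncation-negative}. Your proposed remedy, capping the threshold at $\lambda\ell(Q)^{1+\gamma/p}$, is not carried out, and it does not follow from Theorem~\ref{thm:truncated-differences}(i). When $1+\gamma/p<0$, the capped truncation $(|g(x)-g(y)|-\lambda\ell(Q)^{1+\gamma/p})_+$ dominates the quantity that theorem controls on the region $|x-y|\le\ell(Q)$.

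The missing idea is to apply your split only to pairs at distance comparable to $\ell(Q)$, after first chaining through a third point. On $Q_0$, the set $Z_{x,y}:=Q_0\setminus[B(x,\frac18)\cup B(y,\frac18)]$ has measure at least $1-2\cdot4^{-N}\ge\frac12$. Averaging $|g(x)-g(y)|^p\le2^{p-1}[|g(x)-g(z)|^p+|g(z)-g(y)|^p]$ over $z\in Z_{x,y}$ and integrating in $x,y$ gives
\[
\int_{Q_0}\int_{Q_0}|g(x)-g(y)|^p\,dx\,dy
\le2^{p+1}\iint_{x,z\in Q_0,\ |x-z|\ge\frac18}|g(x)-g(z)|^p\,dx\,dz.
\]
On this region $|x-z|^{p+\gamma}\le N^{p/2}8^{|\gamma|}$ for every $\gamma\le-1$, so your split now works:
\begin{itemize}
\item the truncated part is bounded via \eqref{eq:truncation-negative}, with constant $c_{N,p}2^{|\gamma|}$;
\item the threshold part is bounded by $c_{N,p}8^{|\gamma|}\lambda^p$.
\end{itemize}
This gives \eqref{1-1} for all $\gamma\le-1$ with constant $c_{N,p}8^{|\gamma|}$, and hence the upper bound in (iii).

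In (iv), your positivity argument also invokes \eqref{1-1}, so as written it inherits the same gap. It is cleaner, and independent of (ii), to apply \eqref{eq:truncation-negative} directly to the truncated $h_{\lambda_n}$ along an a.e.\ convergent subsequence. Then use Fatou, since the truncated integrand tends to $|h_0(x)-h_0(y)|^p$ as $\lambda_n\to0^+$.

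The remaining pieces are fine and follow the paper: part (i), the lower bound in (iii) via your direct polar-coordinate computation (the superlevel condition is $|x-y|>(\lambda/|\omega_1|)^{p/|\gamma|}$), and the upper bound in (iv). In (i) you should record explicitly that $\iint_{\{g(x)\ne g(y)\}}|x-y|^{\gamma-N}\,dx\,dy<\infty$ for your jump function when $\gamma>-1$, since that is what gives $\Phi_{\lambda,p}^\gamma(g;Q_0)\lesssim\lambda^p$.
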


Corollary~\ref{thm:classification} gives an affirmative answer to the
aforementioned question posed by
Nguyen  \cite[Open Question~33]{NguyenSurvey} for $\gamma\in(-\infty,-1]$ without assuming the
positivity of the cell constant, and shows that \eqref{1-1} fails in general
for $\gamma\in(-1,\infty)$.

Here are further comments on Corollary~\ref{thm:classification}.

\begin{remark}
\begin{enumerate}[\rm(i)]
\item Although the constant in \cite[Open Question~33]{NguyenSurvey}
is denoted by $C_{N,p}$, its dependence on $\gamma$ is necessary
by Corollary~\ref{thm:classification}(iii).

\item
When $\gamma=-p$, \eqref{1-1} reduces to Nguyen's Poincar\'e inequality
\eqref{intro:eq:nguyen-poincare}.

\item Let $p\in[1,\infty)$ and $\gamma\in(-\infty,-1]$.
Letting $\lambda\to0^+$ in \eqref{1-1} and applying
\eqref{app:eq:smooth-local-limit}, we can obtain
the classical Poincar\'e inequality \eqref{intro:eq:classical-poincare} on cubes.
In this sense, \eqref{1-1} is an improved Poincar\'e inequality.
\end{enumerate}
\end{remark}

Using the Poincar\'e inequality \eqref{1-1}, we obtain the compactness
at a fixed threshold in the sharp parameter range
(see Theorem~\ref{app:thm:fixed-compactness}).
This compactness yields the existence of minimizers for the associated
nonlocal variational problems (see Corollary~\ref{app:cor:regularization}).
When $p=N$, we also deduce from \eqref{1-1} a VMO criterion involving the condition
$\Phi_{\lambda,N}^{\gamma}(u;Q)<\infty$ for some $\lambda\in(0,\infty)$
(see Theorem~\ref{app:thm:single-threshold-vmo}).

A second consequence of Theorem~\ref{thm:truncated-differences}
is a $\Gamma$-liminf estimate.
For $u\in L^1_{\mathrm{loc}}(\Omega)$ on an open set
$\Omega\subset\mathbb R^N$, write
\begin{equation*}
\mathcal E_p(u;\Omega):=
\begin{cases}
\displaystyle\int_\Omega|\nabla u|^p\,dx
&\text{if }p\in(1,\infty)\text{ and }u\in\dot W^{1,p}(\Omega),\\
|Du|(\Omega)&\text{if }p=1\text{ and }u\in\dot{\mathrm{BV}}(\Omega),\\
\infty&\text{otherwise}.
\end{cases}
\end{equation*}
Here, $\dot W^{1,p}(\Omega)$ and $\dot{\mathrm{BV}}(\Omega)$ consist
of locally integrable functions whose distributional gradients belong
to $L^p(\Omega;\mathbb R^N)$ and to the space of finite
vector-valued Radon measures on $\Omega$, respectively.

\begin{corollary}\label{app:thm:gamma-coercivity}
Let $N\in\mathbb N$, $p\in[1,\infty)$, and
$\gamma\in\mathbb R\setminus(-1,0]$, and let
$\Omega\subset\mathbb R^N$ be an open set.
If $\{\lambda_j\}_{j\in\mathbb N}\subset(0,\infty)$ and
$\{u_j\}_{j\in\mathbb N}\subset L^1_{\mathrm{loc}}(\Omega)$ satisfy
$\lambda_j\to L_\gamma$ and $u_j\to u$ in
$L^1_{\mathrm{loc}}(\Omega)$, then
$$
\varliminf_{j\to\infty}\Phi_{\lambda_j,p}^{\gamma}(u_j;\Omega)\ge c_{N,p,\gamma}
\mathcal E_p(u;\Omega),
$$
where the constant $c_{N,p,\gamma}\in(0,\infty)$ depends only on $N$, $p$, and
$\gamma$. Equivalently,
\begin{equation*}
\Gamma\text{-}\varliminf_{\lambda\to L_\gamma}
\Phi_{\lambda,p}^{\gamma}(\cdot;\Omega)
\geq c_{N,p,\gamma}\mathcal E_p(\cdot;\Omega)
\quad\text{in }L^1_{\mathrm{loc}}(\Omega).
\end{equation*}
\end{corollary}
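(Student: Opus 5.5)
The plan is to localize to small cubes, apply Theorem~\ref{thm:truncated-differences} on each, and then pass to the limit using the lower semicontinuity of the energy, which is characterized through difference quotients. We may assume $\varliminf_j\Phi_{\lambda_j,p}^{\gamma}(u_j;\Omega)<\infty$ and pass to a subsequence realizing the liminf. Fix $r>0$ and consider the grid $\mathcal D_r$ of cubes of side $r$ contained in $\Omega$; more generally, we use shifted grids $z+\mathcal D_r$ and average over the shift $z\in[0,r)^N$. For each cube $Q$ in the grid we apply \eqref{eq:truncation-negative} if $\gamma\le-1$ and \eqref{eq:truncation-positive} if $\gamma>0$. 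This gives
\[
\sum_{Q}r^{N-p}\fint_Q\fint_Q T_{\lambda_j}(u_j;x,y)\,dx\,dy\lesssim \sum_Q\Phi^\gamma_{\lambda_j,p}(u_j;Q)\le\Phi^\gamma_{\lambda_j,p}(u_j;\Omega),
\]
where $T_\lambda$ is the corresponding truncated difference: $(|\Delta|-\lambda|x-y|^{1+\gamma/p})_+^p$ in the first case and $\min\{|\Delta|^p,\lambda^p|x-y|^{p+\gamma}\}$ in the second. The last inequality holds because the cubes are disjoint, so the sets $Q\times Q$ are disjoint subsets of $\Omega\times\Omega$.

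Next we let $j\to\infty$ with $r$ fixed. Along a further subsequence, $u_j\to u$ a.e., so $u_j(x)-u_j(y)\to u(x)-u(y)$ for a.e. $(x,y)\in Q\times Q$. Since $|x-y|\le\sqrt N r$ is bounded above on $Q\times Q$, the thresholds go to the right limits. If $\gamma\le-1$, then $\lambda_j\to0$ and $\lambda_j|x-y|^{1+\gamma/p}\to0$ for $x\neq y$. If $\gamma>0$, then $\lambda_j\to\infty$ and the cap $\lambda_j^p|x-y|^{p+\gamma}\to\infty$. In both cases $T_{\lambda_j}(u_j;x,y)\to|u(x)-u(y)|^p$ a.e. By Fatou's lemma on each cube, summed over finitely many cubes in a compact subset $K\Subset\Omega$ (and then over all cubes by monotonicity), we obtain
\[
\sum_{Q\in\mathcal D_r}r^{N-p}\fint_Q\fint_Q|u(x)-u(y)|^p\,dx\,dy\le C\,L,
\]
where $L:=\varliminf_j\Phi^\gamma_{\lambda_j,p}(u_j;\Omega)$. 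The same bound holds for every shifted grid.

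The remaining, and main, step is to show that the left side controls $\mathcal E_p(u;\Omega)$ uniformly as $r\to0$. Averaging over shifts $z\in[0,r)^N$ converts the grid sum into
\[
\int\!\!\int_{\{|x-y|_\infty<r\}} r^{-N-p}\,w_r(x-y)\,|u(x)-u(y)|^p\,dx\,dy,
\]
with weight $w_r(h)=\prod_i(1-|h_i|/r)_+$ (restricted to pairs with $x,y$ in cubes inside $\Omega$). The kernel $\rho_r(h):=c\,r^{-N}w_r(h)|h|^p r^{-p}\cdot|h|^{-p}$, suitably normalized, is a mollifier family concentrating at $0$ with bounded total mass. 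So this is exactly a Bourgain--Brezis--Mironescu type functional $\iint\frac{|u(x)-u(y)|^p}{|x-y|^p}\rho_r(x-y)\,dx\,dy$, up to the fact that $w_r$ is not radial. For this we use the known BBM/Dávila characterization, in its non-radial version: if these functionals are bounded as $r\to0$, then $u\in\dot W^{1,p}$ (respectively $\dot{\mathrm{BV}}$ if $p=1$), and their liminf on $\Omega'\Subset\Omega$ is at least $\int_{\Omega'}\int|\nabla u\cdot h|^p\rho(h)\,dh\,dx\gtrsim\int_{\Omega'}|\nabla u|^p$. Concretely, this follows by mollifying $u$, noting that convexity makes the functional decrease under mollification, and applying Taylor expansion to smooth functions. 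Finally, letting $\Omega'\uparrow\Omega$ gives $\mathcal E_p(u;\Omega)\le C'L$. The delicate points are the boundary cubes, which are handled by working on $\Omega'\Subset\Omega$ with $r<\operatorname{dist}(\Omega',\partial\Omega)/(2\sqrt N)$, and the BV case, where lower semicontinuity of the total variation is used in place of that of $\|\nabla u\|_{L^p}$.
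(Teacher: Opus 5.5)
Your argument is correct, but it is organized differently from the paper's. You first send $j\to\infty$ at a fixed scale $r$, using a.e.\ convergence along a subsequence and Fatou's lemma on each grid cube. Averaging over shifts then gives the uniform bound
\[
r^{-N-p}\int_{\Omega'}\int_{\mathbb R^N} w_r(h)\,|u(x+h)-u(x)|^p\,dh\,dx\lesssim L \quad\text{for all small } r .
\]
Only then do you send $r\to0$, via a BBM-type lower bound for the non-radial kernel $\rho_r(h)=r^{-N}\rho_1(h/r)$ with $\rho_1(k):=w_1(k)|k|^p$. Your sketch of that step (mollify, use Jensen, Taylor-expand, and use $\int|\xi\cdot k|^p w_1(k)\,dk\gtrsim|\xi|^p$) does suffice.

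The paper instead passes to the limit $j\to\infty$ only at the level of the unpowered averages $\fint\fint F_j$. It shows that the truncations $F_j$ converge to $|u(y)-u(z)|$ in $L^1(U\times U)$, via the $1$-Lipschitz property and dominated convergence, so no a.e.\ subsequence is needed. It uses a continuum of cubes $R_\varepsilon(x)$ centred at every $x\in\Omega'$ rather than averaged shifted grids. Most importantly, it goes directly from cube oscillation to the gradient through the one-line estimate
\[
|\nabla(\rho_\varepsilon*u)(x)|\lesssim\varepsilon^{-1}\fint_{R_\varepsilon(x)}\fint_{R_\varepsilon(x)}|u(y)-u(z)|\,dy\,dz,
\]
followed by Jensen, Theorem~\ref{thm:truncated-differences} on each $R_\varepsilon(x)$, and Fubini with the overlap count $(2\varepsilon)^N$.

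What each route buys: the paper's is shorter and bypasses the BBM machinery entirely. Yours yields the stronger intermediate statement that a whole family of BBM energies of $u$ itself is bounded by $CL$ uniformly in $r$, which in particular gives $u\in L^p_{\mathrm{loc}}$ directly. The price is reproving a non-radial BBM lower bound.

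One small slip: the kernel should be $\rho_r(h)=r^{-N-p}w_r(h)|h|^p$. The extra factor $|h|^{-p}$ in your formula cancels the $|h|^p$ and would destroy the bounded-mass property you need.
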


For $p\in[1,\infty)$, $\gamma\in\mathbb R\setminus(-1,0]$, and
$u\in L^1_{\mathrm{loc}}(\mathbb R^N)$, applying
Corollary~\ref{app:thm:gamma-coercivity} to a constant sequence, we obtain
\begin{equation}\label{intro:eq:liminf-lower-bound}
\varliminf_{\lambda\to L_\gamma}\Phi_{\lambda,p}^\gamma(u;\mathbb R^N)
\ge c_{N,p,\gamma}\mathcal E_p(u;\mathbb R^N).
\end{equation}
This inequality is known. Specifically,
for $\gamma\in[-p,-1]$, the liminf estimate \eqref{intro:eq:liminf-lower-bound}
is recorded in \cite[Theorem~16]{NguyenSurvey}.
For $\gamma\in(-\infty,-p)$, Picenni
\cite[Theorem~1.1 and Corollary~1.2]{Picenni2026} proved the
$\Gamma$-liminf lower bound, which yields the
liminf characterization.
For $\gamma\in(0,\infty)$, Gobbino and Picenni
\cite[Theorem~1.1]{GobbinoPicenni} established the $\Gamma$-liminf
lower bound in $L^1_{\mathrm{loc}}(\mathbb R^N)$, and Lin et al.\ \cite[Theorem~1.1]{LinYangYangYuanZhang} obtained the full
$\Gamma$-limit in $L^p(\mathbb R^N)$ on bounded Lipschitz domains.
For $\gamma\in(-\infty,-1]$, the full $\Gamma$-convergence in $L^p(\mathbb R^N)$
on bounded Lipschitz domains was established in
\cite[Theorem~1.3(ii)]{LinEtAlFullRange2026}.
Lin et al.\ \cite[Theorem~1.4]{LinEtAlFullRange2026} also
obtained the full $\Gamma$-convergence in
$L^1_{\mathrm{loc}}(\mathbb R^N)$ for both negative and positive
$\gamma$.
Our purpose here is to give a unified and elementary proof for
both negative and positive $\gamma$ as a direct consequence of Theorem~\ref{thm:truncated-differences}.
Our proof works in any Euclidean space $\mathbb R^N$, without reducing the problem to
$\mathbb R$ or invoking the estimates for the ``asymptotic cost of
oscillations'' in \cite[Propositions~2.2 and~3.1]{GobbinoPicenni}.

The proof of Theorem~\ref{thm:truncated-differences} combines the
dyadic iteration for differences along line segments  with
the ideas of using finiteness of the
nonlocal energy to control the remainder asymptotically from Frank
\cite[Lemma~2]{Frank2026}.
The approach developed in this article
gives unified and elementary proofs of the
liminf characterizations of Bourgain and Nguyen
\cite{BourgainNguyen2006},
the Poincar\'e inequality of Nguyen \cite{Nguyen2011} for $\gamma=-p$,
and the $\Gamma$-liminf lower bound of Gobbino and Picenni
\cite{GobbinoPicenni} for $\gamma\in(0,\infty)$.
Brezis and Nguyen \cite[Remark~2]{BrezisNguyenVMO} called for a more
elementary proof of Nguyen's Poincar\'e inequality on cubes, even in
dimension one. The proof presented here offers an elementary approach to
this inequality, and hence answers their question;
see Remark~\ref{rem:elementary-nguyen-proof}.

The remainder of this article is organized as follows.
In Section~\ref{sec:truncation-estimates}, we show that both estimates in
Theorem~\ref{thm:truncated-differences} hold.
In Section~\ref{sec:main-theorem-completion}, we prove the Poincar\'e
inequality \eqref{1-1}, its failure for $\gamma\in(-1,\infty)$, the dependence
of its constant on $\gamma$, and the positivity of the cell constant,
completing the proof of Corollary~\ref{thm:classification}.
Section~\ref{sec:gamma-applications} presents a unified proof of the
known $\Gamma$-liminf lower bounds and liminf characterizations of
Sobolev and BV spaces.
The applications to compactness, variational problems, and VMO are
presented in Section~\ref{sec:applications}.

We end this introduction by making some notational conventions.
Let $\mathbb{N}:=\{1,2,\ldots\}$.
Throughout this article, cubes have edges parallel to the coordinate axes.
For a measurable set $A$, $|A|$ denotes its Lebesgue measure.
Given a set $E\subset\mathbb{R}^N$, we denote its characteristic
function by $\mathbf{1}_E$.
For an open set $U$, $W^{1,p}(U)$ consists of the functions in $L^p(U)$
whose distributional gradient belongs to $L^p(U;\mathbb R^N)$, and
$\mathrm{BV}(U)$ consists of the functions in $L^1(U)$ whose distributional
gradient is a finite vector-valued Radon measure, with total variation
$|Du|(U)$. The notation $f\lesssim g$ means that $f\leq Cg$ for some
constant $C\in(0,\infty)$. If both $f\lesssim g$ and $g\lesssim f$
hold, we write $f\sim g$. The notation $\lambda\to 0^+$ means that
there exists a positive constant $c_0$ such that $\lambda\in (0,c_0)$
and $\lambda\to 0$. Finally, in all proofs we consistently retain the notation
introduced in the original theorem (or related statement).

\section{Proof of Theorem~\ref{thm:truncated-differences}}
\label{sec:truncation-estimates}

In this section, we show that the two complementary estimates in
Theorem~\ref{thm:truncated-differences} hold.
We first record three elementary lemmas that will be used in both
negative and positive $\gamma$. The first one extends the summation
argument in \cite[(10)]{Frank2026}.

\begin{lemma}\label{lem:geometric-summation}
Let $p\in[1,\infty)$, $J\in\mathbb N$, and
$\varepsilon_0,\ldots,\varepsilon_{J-1}\in\{0,1\}$.
Then the following estimates hold.
\begin{enumerate}[\rm(i)]
\item For any $q\in(1,\infty)$,
$$
\left(\sum_{j=0}^{J-1}q^j\varepsilon_j\right)^p
\leq\frac{q^p}{(q-1)^p}\sum_{j=0}^{J-1}q^{jp}\varepsilon_j.
$$
\item For any $q\in(0,1)$,
\begin{equation}\label{eq:decreasing-geometric-sum}
\left(\sum_{j=0}^{J-1}q^j\varepsilon_j\right)^p
\leq\frac{1}{(1-q)^p}\sum_{j=0}^{J-1}q^{jp}\varepsilon_j.
\end{equation}
\end{enumerate}
Both estimates also hold for infinite sums over $j\geq0$ and any
sequence $\{\varepsilon_j\}_{j\geq0}\subset\{0,1\}$.
\end{lemma}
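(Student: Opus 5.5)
We prove both parts with a single device: Hölder's inequality in weighted form, where the weights are chosen to dominate the geometric factor $q^j$. Write $S:=\sum_{j=0}^{J-1}q^j\varepsilon_j$. If $p=1$, both estimates are immediate. For (i) we have $\frac{q}{q-1}\ge1$, and for (ii) we have $\frac1{1-q}\ge1$, while $\varepsilon_j\ge 0$. So assume $p\in(1,\infty)$ and let $p'$ denote the conjugate exponent.

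For (ii), split $q^j=q^{j/p'}\cdot q^{j/p}$ and apply Hölder's inequality to the sum $\sum_j q^{j/p'}\,(q^{j/p}\varepsilon_j)$. Since $\varepsilon_j\in\{0,1\}$, we have $\varepsilon_j^p=\varepsilon_j$, and this gives
$$
S^p\le\Big(\sum_{j\ge0}q^{j}\Big)^{p-1}\sum_{j=0}^{J-1}q^{j}\varepsilon_j
=\frac{1}{(1-q)^{p-1}}\sum_{j=0}^{J-1}q^{j}\varepsilon_j .
$$
This is not yet the claimed bound, because the right-hand side contains $q^{j}$ rather than $q^{jp}$, and $q^j\ge q^{jp}$ goes the wrong way. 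The fix is to use a different split, $q^j=q^{j(1-1/p)\theta}\cdot q^{j-j(1-1/p)\theta}$ with a suitable exponent. The simplest choice is to pick weights $w_j:=(1-q)q^j$, whose total mass is at most $1$. Then
$$
S=\frac1{1-q}\sum_j w_j\varepsilon_j,
$$
and by Jensen's inequality for the convex function $t\mapsto t^p$ (valid for sub-probability weights, since $(\sum w_ja_j)^p\le(\sum w_j)^{p-1}\sum w_ja_j^p\le\sum w_ja_j^p$),
$$
S^p\le\frac{1}{(1-q)^p}\sum_j w_j\varepsilon_j=\frac{1}{(1-q)^{p-1}}\sum_j q^j\varepsilon_j .
$$
This still has $q^j$ and not $q^{jp}$, so plain Jensen is too weak. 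Instead we use the structure that $\varepsilon_j\in\{0,1\}$.

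Let $j_0$ be the smallest index with $\varepsilon_{j_0}=1$; if there is none, both sides vanish. Then
$$
S\le\sum_{j\ge j_0}q^j=\frac{q^{j_0}}{1-q},
$$
so
$$
S^p\le\frac{q^{j_0p}}{(1-q)^p}\le\frac{1}{(1-q)^p}\sum_j q^{jp}\varepsilon_j,
$$
because the $j_0$ term alone appears in the last sum. This proves (ii), including for infinite sums.

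For (i), use the largest index $j_1$ with $\varepsilon_{j_1}=1$. In the finite case this index exists whenever $S\ne0$. Then
$$
S\le\sum_{j=0}^{j_1}q^j=\frac{q^{j_1+1}-1}{q-1}\le\frac{q^{j_1+1}}{q-1},
$$
so
$$
S^p\le\frac{q^p}{(q-1)^p}\,q^{j_1p}\le\frac{q^p}{(q-1)^p}\sum_j q^{jp}\varepsilon_j .
$$

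For infinite sums in (i), there are two cases. If infinitely many $\varepsilon_j$ equal $1$, the right-hand side is infinite and there is nothing to prove. Otherwise the sum is finite and the argument above applies.

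The only subtle point is the one identified in (ii): convexity arguments lose a power. Selecting the extremal index (the first nonzero term for $q<1$, the last for $q>1$) avoids this loss by exploiting that the $\varepsilon_j$ take values in $\{0,1\}$.
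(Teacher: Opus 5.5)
Your final argument is correct and is essentially the paper's proof: bound the sum by the geometric series starting or ending at the extremal index (the last $1$ when $q>1$, the first $1$ when $q<1$), then keep only that single term on the right. Your case split for infinite sums in (i) replaces the paper's monotone-convergence passage $J\to\infty$ and is equally valid; the H\"older/Jensen detours and the separate $p=1$ case are unnecessary and can simply be dropped from a write-up.
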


\begin{proof}
If $\varepsilon_0=\cdots=\varepsilon_{J-1}=0$, both estimates are
immediate. Otherwise, for (i), let $m$ be the largest index such that
$\varepsilon_m=1$. An elementary calculation gives
$$
\left(\sum_{j=0}^{J-1}q^j\varepsilon_j\right)^p
\leq\left(\frac{q^{m+1}-1}{q-1}\right)^p
\leq\frac{q^{p(m+1)}}{(q-1)^p}
\leq\frac{q^p}{(q-1)^p}\sum_{j=0}^{J-1}q^{jp}\varepsilon_j.
$$
For (ii), let $m$ instead be the smallest index such that
$\varepsilon_m=1$. Then
$$
\left(\sum_{j=0}^{J-1}q^j\varepsilon_j\right)^p
\leq\left(\sum_{j=m}^{\infty}q^j\right)^p
=\frac{q^{mp}}{(1-q)^p}
\leq\frac{1}{(1-q)^p}\sum_{j=0}^{J-1}q^{jp}\varepsilon_j,
$$
which gives \eqref{eq:decreasing-geometric-sum}.
Letting $J\to\infty$ and applying the monotone convergence theorem,
we obtain both assertions for infinite sums. This completes the proof
of Lemma \ref{lem:geometric-summation}.
\end{proof}

For any integer $j\in[0,\infty)$, any
$k\in\{0,\ldots,2^j-1\}$, and any $f\in L^1([0,1))$, define
$$
(\mathbb E_j f)(t)
:=2^j\int_{k2^{-j}}^{(k+1)2^{-j}}f(s)\,ds,
\quad \forall\,t\in[k2^{-j},(k+1)2^{-j}).
$$
The next lemma records the iteration for the two complementary
truncations; its first part borrows some ideas from
\cite[Lemma~2, Step~2]{Frank2026}.

\begin{lemma}\label{lem:dyadic-truncation}
Let $J\in\mathbb N$, and let $G_0,\ldots,G_J$ be real-valued functions
on $[0,1)$ such that, for any $j\in\{0,\ldots,J\}$, $G_j$ is constant on any interval
$[k2^{-j},(k+1)2^{-j})$, $k\in\{0,\ldots,2^j-1\}$, and
$G_j=\mathbb E_jG_{j+1}$ for $j\in\{0,\ldots,J-1\}$.
For $a_0,\ldots,a_J\in(0,\infty)$ and $j\in\{0,\ldots,J\}$, let
$b_j:=\mathbf1_{\{|G_j|>a_j\}}$.
Then the following estimates hold.
\begin{enumerate}[\rm(i)]
\item If $a_0\leq\cdots\leq a_J$, then
$$
(|G_0|-a_0)_+
\leq\int_0^1(|G_J(t)|-a_J)_+\,dt
+\sum_{j=0}^{J-1}(a_{j+1}-a_j)\int_0^1b_j(t)\,dt.
$$
\item If $a_0\geq\cdots\geq a_J$, then
$$
\min\{|G_0|,a_0\}
\leq a_J+4\sum_{j=0}^{J-1}a_j\int_0^1b_{j+1}(t)\,dt.
$$
\end{enumerate}
\end{lemma}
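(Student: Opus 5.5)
The plan for both parts is to compare consecutive levels $j$ and $j+1$ on a single dyadic interval and then telescope. Two structural facts will be used throughout. First, $G_0$ is constant on $[0,1)$, so $(|G_0|-a_0)_+=\int_0^1(|G_0|-a_0)_+\,dt$, and similarly for $\min\{|G_0|,a_0\}$. Second, on each interval $I=[k2^{-j},(k+1)2^{-j})$ the relation $G_j=\mathbb E_jG_{j+1}$ says that $G_j|_I$ is the average of the two values of $G_{j+1}$ on the two children of $I$. In particular, $|G_j|\le \fint_I|G_{j+1}|$ on $I$.

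For (i), I will prove the one-step bound
$$
\int_0^1(|G_j|-a_j)_+\,dt\le\int_0^1(|G_{j+1}|-a_{j+1})_+\,dt+(a_{j+1}-a_j)\int_0^1b_j\,dt ,
$$
and then sum it over $j=0,\dots,J-1$. It suffices to check the bound on a fixed $I$ of level $j$.
\begin{itemize}
\item If $b_j=0$ on $I$, the left integrand vanishes on $I$ and there is nothing to prove.
\item If $b_j=1$ on $I$, then on $I$
$$
|G_j|-a_j\le \fint_I\bigl(|G_{j+1}|-a_{j+1}\bigr)+(a_{j+1}-a_j)\le \fint_I(|G_{j+1}|-a_{j+1})_+ +(a_{j+1}-a_j).
$$
Integrating over $I$ gives the claim on $I$.
\end{itemize}
Here $a_{j+1}-a_j\ge0$ is used. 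Telescoping and using that $G_0$ is constant yields (i).

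For (ii), I will prove the analogous one-step bound
$$
\int_0^1\min\{|G_j|,a_j\}\,dt\le\int_0^1\min\{|G_{j+1}|,a_{j+1}\}\,dt+2a_j\int_0^1b_{j+1}\,dt .
$$
Summing over $j$ and using $\int_0^1\min\{|G_J|,a_J\}\le a_J$ then gives (ii), even with the constant $2$ in place of $4$. The argument on a fixed $I$ of level $j$ goes as follows.
\begin{itemize}
\item Use the pointwise split $|G_{j+1}|\le\min\{|G_{j+1}|,a_{j+1}\}+|G_{j+1}|b_{j+1}$.
\item Combine it with the elementary inequality $\min\{x+y,a\}\le x+\min\{y,a\}$ for $x,y,a\ge0$, taking $x=\fint_I\min\{|G_{j+1}|,a_{j+1}\}$. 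This gives, on $I$,
$$
\min\{|G_j|,a_j\}\le \fint_I\min\{|G_{j+1}|,a_{j+1}\}+\min\Bigl\{\fint_I|G_{j+1}|b_{j+1},\,a_j\Bigr\}.
$$
\end{itemize}
Note that monotonicity $a_{j+1}\le a_j$ is not needed for this step.

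The main obstacle is the last term, because $|G_{j+1}|$ is unbounded on $\{b_{j+1}=1\}$. The dyadic structure resolves it: $I$ has exactly two children, so $\fint_I b_{j+1}\in\{0,\tfrac12,1\}$.
\begin{itemize}
\item If $\fint_I b_{j+1}=0$, the term $\fint_I|G_{j+1}|b_{j+1}$ vanishes.
\item Otherwise $\fint_I b_{j+1}\ge\tfrac12$, so $a_j\le 2a_j\fint_I b_{j+1}$.
\end{itemize}
In either case $\min\{\fint_I|G_{j+1}|b_{j+1},a_j\}\le 2a_j\fint_I b_{j+1}$. Integrating over $I$ and summing over the intervals of level $j$ gives the one-step bound, which completes the plan.
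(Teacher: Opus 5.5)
Your proof is correct. For (i) it is the paper's argument: the same one-step bound $(|G_j|-a_j)_+\le\mathbb E_j(|G_{j+1}|-a_{j+1})_++(a_{j+1}-a_j)b_j$, integrated and then telescoped. As an aside, monotonicity of the $a_j$ is not actually needed there: on intervals with $b_j=0$ the $b_j$-term vanishes, so the right-hand side is trivially nonnegative.

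For (ii) you take a genuinely different route.

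\emph{The paper's route.} It clips the signed values, setting $V_j:=T_{a_j}(G_j)$ with $T_a(s)=\max\{-a,\min\{s,a\}\}$. It proves the two-sided drift bound $|V_j-\mathbb E_jV_{j+1}|\le4a_j\mathbb E_jb_{j+1}$ and telescopes the exact identity $V_0=\int_0^1V_J\,dt+\sum_{j}\int_0^1(V_j-\mathbb E_jV_{j+1})\,dt$. The hypothesis $a_{j+1}\le a_j$ is used twice: to get $V_j=\mathbb E_jV_{j+1}$ when both children are good, and to bound $a_j+a_{j+1}\le2a_j$ otherwise.

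\emph{Your route.} You prove a one-sided, supermartingale-type inequality for the nonnegative quantity $\min\{|G_j|,a_j\}$. You use the split $|G_{j+1}|\le\min\{|G_{j+1}|,a_{j+1}\}+|G_{j+1}|b_{j+1}$ and the subadditivity $\min\{x+y,a\}\le x+\min\{y,a\}$. The fact that $\mathbb E_jb_{j+1}$ takes values in $\{0,\frac12,1\}$, because each interval has exactly two children, absorbs the unbounded part.

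\emph{What each buys.} Your version gives the constant $2$ instead of $4$ and needs no monotonicity assumption on the thresholds. It also makes (ii) structurally parallel to (i): a one-step bound on integrals, then telescoping. The paper's version gives stronger two-sided control of the clipped process, which the lemma itself does not need. Since the thresholds $a_j=2^{-\gamma j/p}|h|^\beta$ in the proof of Theorem~\ref{thm:truncated-differences} are monotone, either version suffices there.
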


\begin{proof}
For (i), using $G_j=\mathbb E_jG_{j+1}$, we obtain
$$
|G_j|\leq\mathbb E_j|G_{j+1}|
\leq\mathbb E_j(|G_{j+1}|-a_{j+1})_++a_{j+1}.
$$
On the set where $b_j=1$, we subtract $a_j$ from this estimate.
On its complement, $(|G_j|-a_j)_+=0$. Thus,
\begin{equation*}
(|G_j|-a_j)_+
\leq\mathbb E_j(|G_{j+1}|-a_{j+1})_+
+(a_{j+1}-a_j)b_j.
\end{equation*}
Since $\int_0^1(\mathbb E_j f)(t)\,dt=\int_0^1 f(t)\,dt$ for any
$f\in L^1([0,1))$, we infer that
$$
\int_0^1(|G_j(t)|-a_j)_+\,dt
\leq\int_0^1(|G_{j+1}(t)|-a_{j+1})_+\,dt
+(a_{j+1}-a_j)\int_0^1b_j(t)\,dt.
$$
Summing over $j\in\{0,\ldots,J-1\}$ and cancelling the intermediate terms
give (i).

For (ii), let $T_a(s):=\max\{-a,\min\{s,a\}\}$ for
$a\in(0,\infty)$ and $s\in\mathbb R$, and let $V_j:=T_{a_j}(G_j)$.
We claim that
\begin{equation}\label{eq:positive-clipping-recursion}
|V_j-\mathbb E_jV_{j+1}|
\leq4a_j\mathbb E_jb_{j+1}.
\end{equation}
Indeed, on any interval of the $j$th dyadic partition, if $b_{j+1}=0$
on both children, then $|G_{j+1}|\leq a_{j+1}\leq a_j$
there, so that $V_j=G_j=\mathbb E_jG_{j+1}=\mathbb E_jV_{j+1}$.
Otherwise, $\mathbb E_jb_{j+1}\geq\frac12$, while
$|V_j-\mathbb E_jV_{j+1}|\leq a_j+a_{j+1}\leq2a_j$.
These observations prove \eqref{eq:positive-clipping-recursion}.
Using the identity
$$
V_0=\int_0^1V_J(t)\,dt+
\sum_{j=0}^{J-1}\int_0^1
\left[V_j(t)-\mathbb E_jV_{j+1}(t)\right]\,dt
$$
together with $|V_0|=\min\{|G_0|,a_0\}$, $|V_J|\leq a_J$, and
\eqref{eq:positive-clipping-recursion}, we obtain (ii).
This completes the proof of Lemma \ref{lem:dyadic-truncation}.
\end{proof}

\begin{lemma}\label{lem:dyadic-integration}
Let $N\in\mathbb N$, $p\in[1,\infty)$, and $\gamma\in\mathbb R$.
For any nonnegative measurable function $M$ on $(-1,1)^N$,
$$
\sum_{j=0}^{\infty}2^{-\gamma j}
\int_{(-1,1)^N}|h|^{p+\gamma}M(2^{-j}h)\,dh
\lesssim\int_{(-1,1)^N}|z|^{\gamma-N}M(z)\,dz,
$$
where the implicit positive constant depends only on $N$ and $p$.
\end{lemma}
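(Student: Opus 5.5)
Substitute $z=2^{-j}h$ in each summand and then sum the resulting weights in closed form. For fixed $j$, setting $h=2^jz$ gives $dh=2^{jN}dz$, $|h|^{p+\gamma}=2^{j(p+\gamma)}|z|^{p+\gamma}$, and the domain $h\in(-1,1)^N$ becomes $z\in(-2^{-j},2^{-j})^N$. The $j$th term therefore equals
$$
2^{-\gamma j}\,2^{j(p+\gamma)}\,2^{jN}\int_{(-2^{-j},2^{-j})^N}|z|^{p+\gamma}M(z)\,dz
=\int_{(-2^{-j},2^{-j})^N}2^{j(N+p)}|z|^{p+\gamma}M(z)\,dz .
$$
Since everything is nonnegative, Tonelli lets us interchange sum and integral. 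The left-hand side then becomes
$$
\int_{(-1,1)^N}|z|^{p+\gamma}M(z)\,W(z)\,dz,
\qquad
W(z):=\sum_{j\ge0:\ z\in(-2^{-j},2^{-j})^N}2^{j(N+p)}.
$$

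Next we bound the weight $W$. For $z\neq0$ in $(-1,1)^N$, the condition $z\in(-2^{-j},2^{-j})^N$ means $\|z\|_\infty<2^{-j}$. Hence the admissible $j$ run over $0\le j\le j_0$, where $j_0$ is the largest integer with $2^{-j_0}>\|z\|_\infty$. This is a finite geometric sum with ratio $2^{N+p}\ge 4$, so it is dominated by its last term:
$$
W(z)\le \frac{2^{N+p}}{2^{N+p}-1}\,2^{j_0(N+p)}\le 2\,\|z\|_\infty^{-(N+p)}\le 2N^{(N+p)/2}|z|^{-(N+p)},
$$
using $\|z\|_\infty\ge |z|/\sqrt N$. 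The set $\{z=0\}$ has measure zero and can be ignored.

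Combining the two steps gives $|z|^{p+\gamma}W(z)\lesssim |z|^{\gamma-N}$, which is the claimed inequality. The constant is $2N^{(N+p)/2}$, so it depends only on $N$ and $p$ and not on $\gamma$, as the statement asserts. The only point that needs care is the geometric-sum bound for $W$. It relies on the ratio $2^{N+p}$ exceeding $1$, which holds uniformly, and on the exponent $\gamma$ cancelling exactly: the factor $2^{-\gamma j}$ offsets the $2^{\gamma j}$ from $|h|^{\gamma}$, so no $\gamma$-dependence enters. Everything else is a routine change of variables.
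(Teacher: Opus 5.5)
Your proposal is correct and follows essentially the same route as the paper: the change of variables $z=2^{-j}h$ and Tonelli's theorem reduce everything to bounding the weight $\sum_{j\ge0}2^{(N+p)j}\mathbf{1}_{(-2^{-j},2^{-j})^N}(z)$ by the last term of a finite geometric series, which is $\lesssim|z|^{-N-p}$ with a constant independent of $\gamma$. The only cosmetic difference is that you locate the cutoff index via $\|z\|_\infty$, whereas the paper chooses $m$ with $2^m\le\sqrt N/|z|<2^{m+1}$.
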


\begin{proof}
Fix any $z\in(-1,1)^N\setminus\{\mathbf{0}\}$ and choose the integer
$m\geq0$ such that
$$
2^m\leq\frac{\sqrt N}{|z|}<2^{m+1}.
$$
The condition $z\in(-2^{-j},2^{-j})^N$ implies
$|z|\leq\sqrt N\,2^{-j}$, and hence $j\leq m$.
An elementary calculation gives
\begin{align}
\sum_{j=0}^{\infty}2^{(N+p)j}\mathbf{1}_{(-2^{-j},2^{-j})^N}(z)
&\leq\sum_{j=0}^{m}2^{(N+p)j}
=\frac{2^{(N+p)(m+1)}-1}{2^{N+p}-1}\notag\\
&\lesssim2^{(N+p)m}
\leq\left(\frac{\sqrt N}{|z|}\right)^{N+p}
\lesssim|z|^{-N-p}.
\label{eq:dyadic-kernel}
\end{align}
By the change of variables $z=2^{-j}h$, Tonelli's theorem, and
\eqref{eq:dyadic-kernel}, we obtain
\begin{align*}
&\sum_{j=0}^{\infty}2^{-\gamma j}
\int_{(-1,1)^N}|h|^{p+\gamma}M(2^{-j}h)\,dh\\
&\quad=\int_{(-1,1)^N}|z|^{p+\gamma}M(z)
\sum_{j=0}^{\infty}2^{(N+p)j}\mathbf{1}_{(-2^{-j},2^{-j})^N}(z)\,dz\\
&\quad\lesssim\int_{(-1,1)^N}|z|^{\gamma-N}M(z)\,dz.
\end{align*}
This completes the proof of Lemma \ref{lem:dyadic-integration}.
\end{proof}

With these preparations, we are now ready
to show Theorem \ref{thm:truncated-differences}.
\begin{proof}[Proof of Theorem \ref{thm:truncated-differences}]
By translation and dilation, we conclude that it suffices to consider
$Q=Q_0:=(0,1)^N$. Indeed, if
$Q=x_0+\ell(Q)Q_0$, then
the function $v(z):=g(x_0+\ell(Q)z)$ and the threshold
$\mu:=\lambda\ell(Q)^{1+\frac{\gamma}{p}}$ satisfy
$$
\Phi_{\mu,p}^{\gamma}(v;Q_0)
=\ell(Q)^{p-N}\Phi_{\lambda,p}^{\gamma}(g;Q).
$$
The left-hand sides transform into the corresponding integral
averages on $Q$. Applying the estimates with $\lambda=1$ to
$\frac{g}{\lambda}$ then reduces the proof to $\lambda=1$.

Assume first that $g$ is bounded and measurable. Let
$\beta:=1+\frac{\gamma}{p}$.
For $h\in(-1,1)^N$, define
$$
Q_{0,h}:=\{x\in Q_0:x+h\in Q_0\}\ \text{and}\
\Delta_hg(x):=g(x+h)-g(x)
$$
for any $x\in Q_{0,h}$.
For $h\in(-1,1)^N\setminus\{\mathbf{0}\}$, let
$$
M(h):=\left|\left\{x\in Q_{0,h}:
\frac{|\Delta_hg(x)|}{|h|^\beta}>1\right\}\right|,
$$ and let $M(\mathbf{0}):=0.$
By the change of variables $y=x+h$ and Tonelli's theorem, we obtain
\begin{equation}\label{eq:energy-translation}
\mathcal E
:=\iint\nolimits_{E_{1,\frac{\gamma}{p}}[g;Q_0]}
|x-y|^{\gamma-N}\,dx\,dy
=\int_{(-1,1)^N}|h|^{\gamma-N}M(h)\,dh.
\end{equation}
We may assume that $\mathcal E<\infty$ because otherwise the statement
is immediate.

Fix $h\in(-1,1)^N\setminus\{\mathbf{0}\}$ and $x\in Q_{0,h}$.
Since $Q_0$ is convex, it follows that the segment joining $x$ to $x+h$
is contained in $Q_0$.  Given an integer $j\ge 0$,
define a function $G_j$ on $[0,1)$ by
\begin{equation}\label{eq:dyadic-increments}
G_j(t):=2^j\Delta_{2^{-j}h}g(x+k2^{-j}h),
\quad
t\in[k2^{-j},(k+1)2^{-j}),\quad k\in\{0,\ldots,2^j-1\}.
\end{equation}
By the definition of $G_j$ and the identity
$$
\Delta_{2^{-j}h}g(\cdot)
=\Delta_{2^{-j-1}h}g(\cdot)
+\Delta_{2^{-j-1}h}g(\cdot+2^{-j-1}h)
$$
for any $j\in \mathbb Z_+$ and $h\in(-1,1)^N$,
we obtain $G_j=\mathbb E_jG_{j+1}$
and $G_0=\Delta_hg(x).$
Define
\begin{equation}\label{eq:thresholds}
a_j:=2^j|2^{-j}h|^\beta
=2^{-\frac{\gamma j}{p}}|h|^\beta
\ \text{and}\
b_j(t):=\mathbf{1}_{\{|G_j(t)|>a_j\}}.
\end{equation}
For any $k\in\{0,\ldots,2^j-1\}$, the convexity of $Q_0$ gives
$Q_{0,h}+k2^{-j}h\subset Q_{0,2^{-j}h}$.
From \eqref{eq:dyadic-increments} and \eqref{eq:thresholds}, we deduce
that
\begin{align}
\int_{Q_{0,h}}\int_0^1b_j(t)\,dt\,dx
=2^{-j}\sum_{k=0}^{2^j-1}\int_{Q_{0,h}}
\mathbf{1}_{\{\frac{|\Delta_{2^{-j}h}g(x+k2^{-j}h)|}{|2^{-j}h|^\beta}>1\}}\,dx
\leq M(2^{-j}h).
\label{eq:indicator-average}
\end{align}

Next, we consider the following two cases for $\gamma$.

\emph{Case 1: $\gamma\in(-\infty,-1]$.}
In this case, since $a_{j+1}=2^{-\frac{\gamma}{p}}a_j$ and
$2^{-\frac{\gamma}{p}}\in(1,\infty)$, we infer from
Lemma~\ref{lem:dyadic-truncation}(i) that, for any $J\in\mathbb N$,
\begin{equation}\label{eq:finite-iteration}
\left(|\Delta_hg(x)|-|h|^\beta\right)_+
\leq\int_0^1H_J(t)\,dt+R_J(x,h),
\end{equation}
where
\begin{equation*}
H_J(t):=\sum_{j=0}^{J-1}(a_{j+1}-a_j)b_j(t)\ \text{and}\
R_J(x,h):=\int_0^1(|G_J(t)|-a_J)_+\,dt.
\end{equation*}
For any $t\in(0,1)$,
applying Lemma~\ref{lem:geometric-summation}(i) with $q=2^{-\frac{\gamma}{p}}$ and
$\varepsilon_j=b_j(t)$, and using
$a_{j+1}-a_j=(q-1)q^j|h|^\beta$, we obtain
\begin{equation}\label{eq:geometric-sum}
[H_J(t)]^p
\leq2^{-\gamma}\sum_{j=0}^{J-1}a_j^pb_j(t).
\end{equation}
From \eqref{eq:geometric-sum} and \eqref{eq:indicator-average},
it follows that
\begin{align}
&\int_{(-1,1)^N}\int_{Q_{0,h}}
\left[\int_0^1H_J(t)\,dt\right]^p\,dx\,dh
\notag\\
&\quad\leq\int_{(-1,1)^N}\int_{Q_{0,h}}\int_0^1[H_J(t)]^p\,dt\,dx\,dh
\notag\\
&\quad\leq2^{-\gamma}\sum_{j=0}^{J-1}2^{-\gamma j}
\int_{(-1,1)^N}|h|^{p+\gamma}M(2^{-j}h)\,dh.
\label{eq:sum-before-kernel}
\end{align}
Applying Lemma~\ref{lem:dyadic-integration} to \eqref{eq:sum-before-kernel} and using
\eqref{eq:energy-translation}, we find that
\begin{equation}\label{eq:uniform-main-sum}
\int_{(-1,1)^N}\int_{Q_{0,h}}
\left[\int_0^1H_J(t)\,dt\right]^p\,dx\,dh
\lesssim2^{|\gamma|}\mathcal E,
\end{equation}
where the implicit positive constant depends only on $N$ and $p$.

It remains to estimate the remainder in \eqref{eq:finite-iteration}.
Since $g$ is bounded, we deduce that, for any $t\in(0,1)$,
$$
(|G_J(t)|-a_J)_+
\leq2^{J+1}\lVert g\rVert _{L^\infty(Q_0)}b_J(t).
$$
By \eqref{eq:indicator-average} and the change of variables
$z=2^{-J}h$, we obtain
\begin{align*}
\int_{(-1,1)^N}\int_{Q_{0,h}}R_J(x,h)\,dx\,dh
&\leq2^{J+1}\lVert g\rVert _{L^\infty(Q_0)}
\int_{(-1,1)^N} M(2^{-J}h)\,dh
\notag\\
&=2\lVert g\rVert _{L^\infty(Q_0)}2^{(N+1)J}
\int_{(-2^{-J},2^{-J})^N}M(z)\,dz.
\end{align*}
For $z\in(-2^{-J},2^{-J})^N\setminus\{\mathbf{0}\}$, we have
$$
2^{(N+1)J}
=2^{(1+\gamma)J}|2^Jz|^{N-\gamma}|z|^{\gamma-N}
\leq N^{\frac{N-\gamma}{2}}2^{(1+\gamma)J}|z|^{\gamma-N}.
$$
Consequently, as $J\to\infty$,
\begin{equation}\label{eq:remainder-vanishes}
\int_{(-1,1)^N}\int_{Q_{0,h}}R_J(x,h)\,dx\,dh
\lesssim\lVert g\rVert _{L^\infty(Q_0)}2^{(1+\gamma)J}
\int_{(-2^{-J},2^{-J})^N}|z|^{\gamma-N}M(z)\,dz
\to 0,
\end{equation}
where the implicit positive constant is independent of $J$.
Indeed, $2^{(1+\gamma)J}\leq1$ because $\gamma\leq-1$, and the
integral on the right tends to zero by
\eqref{eq:energy-translation} and $\mathcal E<\infty$.

For any $a,r\in[0,\infty)$, observe that
$$
a^p-(a-r)_+^p
=p\int_{(a-r)_+}^{a}t^{p-1}\,dt
\leq p(1+a)^{p-1}r.
$$
Applying this inequality and \eqref{eq:finite-iteration}, and using
$(|\Delta_hg(x)|-|h|^\beta)_+\leq2\lVert g\rVert _{L^\infty(Q_0)}$,
we obtain
$$
\left(|\Delta_hg(x)|-|h|^\beta\right)_+^p
\leq\left[\int_0^1H_J(t)\,dt\right]^p
+p\left(1+2\lVert g\rVert _{L^\infty(Q_0)}\right)^{p-1}R_J(x,h).
$$
Integrating this inequality, using \eqref{eq:uniform-main-sum} and
\eqref{eq:remainder-vanishes}, and letting $J\to\infty$, we conclude that
\begin{equation}\label{eq:truncated-increment-bound}
\int_{(-1,1)^N}\int_{Q_{0,h}}
\left(|\Delta_hg(x)|-|h|^\beta\right)_+^p\,dx\,dh
\lesssim2^{|\gamma|}\mathcal E,
\end{equation}
where the implicit positive constant depends only on $N$ and $p$.
By the change of variables $y=x+h$, we obtain
\eqref{eq:truncation-negative} for $Q=Q_0$ and $\lambda=1$.

\emph{Case 2: $\gamma\in(0,\infty)$.}
In this case, from $a_{j+1}\leq a_j$, the convergence $a_J\to0$ as $J\to\infty$, and
Lemma~\ref{lem:dyadic-truncation}(ii), we infer that
\begin{equation*}
\min\{|\Delta_hg(x)|,a_0\}
\leq a_J+4\int_0^1\sum_{j=0}^{J-1}a_jb_{j+1}(t)\,dt.
\end{equation*}
Letting $J\to\infty$,
we obtain
\begin{equation}\label{eq:positive-infinite-iteration}
\min\{|\Delta_hg(x)|,|h|^\beta\}
\leq4\int_0^1\sum_{j=0}^{\infty}a_jb_{j+1}(t)\,dt.
\end{equation}
Applying Lemma~\ref{lem:geometric-summation}(ii) with
$q=2^{-\frac{\gamma}{p}}$ and $\varepsilon_j=b_{j+1}(t)$, and using
\eqref{eq:positive-infinite-iteration} and \eqref{eq:indicator-average},
we conclude that
\begin{align*}
&\int_{(-1,1)^N}\int_{Q_{0,h}}
\min\{|\Delta_hg(x)|^p,|h|^{p+\gamma}\}\,dx\,dh\\
&\quad\leq\frac{4^p}{(1-2^{-\frac{\gamma}{p}})^p}
\sum_{j=0}^{\infty}2^{-\gamma j}
\int_{(-1,1)^N}|h|^{p+\gamma}M(2^{-j-1}h)\,dh\\
&\quad=\frac{4^p2^\gamma}{(1-2^{-\frac{\gamma}{p}})^p}
\sum_{j=1}^{\infty}2^{-\gamma j}
\int_{(-1,1)^N}|h|^{p+\gamma}M(2^{-j}h)\,dh
\lesssim\frac{4^p2^\gamma}{(1-2^{-\frac{\gamma}{p}})^p}
\mathcal E,
\end{align*}
where the last step used Lemma~\ref{lem:dyadic-integration} and
\eqref{eq:energy-translation}, and where the implicit positive constant
depends only on $N$ and $p$.
By the change of variables $y=x+h$, we obtain
\eqref{eq:truncation-positive} for $Q=Q_0$ and $\lambda=1$.

Finally, let $g:Q_0\to\mathbb R$ be an arbitrary measurable function.
For $m\in\mathbb N$ and $x\in Q_0$, let
$$
g_m(x):=\max\{-m,\min\{g(x),m\}\}.
$$
Since $|g_m(x)-g_m(y)|\leq|g(x)-g(y)|$, it follows that
$$
E_{1,\frac{\gamma}{p}}[g_m;Q_0]
\subset E_{1,\frac{\gamma}{p}}[g;Q_0].
$$
Applying the estimates proved above to $g_m$ and then using
$g_m\to g$ pointwise as $m\to\infty$ and Fatou's lemma, we obtain
the same estimates for $g$.
This completes the proof
of Theorem~\ref{thm:truncated-differences}.
\end{proof}

\begin{remark}
The proof gives the factors $2^{|\gamma|}$ in
\eqref{eq:truncation-negative} and
$\frac{4^p2^\gamma}{(1-2^{-\frac{\gamma}{p}})^p}$ in
\eqref{eq:truncation-positive}, with implicit positive constants
depending only on $N$ and $p$.
In particular, the constant in \eqref{eq:truncation-negative} is
bounded uniformly when $\gamma$ ranges over a compact subinterval of
$(-\infty,-1]$.
\end{remark}

\section{Sharp Nonlocal Poincar\'e Inequalities}
\label{sec:main-theorem-completion}

The main goal of this section is to explain how we deduce
the Poincar\'e inequality \eqref{1-1} from
Theorem~\ref{thm:truncated-differences} in Subsection \ref{s3.1}.
We then show the failure outside
$\gamma\in(-\infty,-1]$ in Subsection \ref{sec:failure},
study the dependence of the constant
in \eqref{1-1} on
$\gamma$ in Subsection \ref{s3.3},
and deduce the positivity of the cell constant in Subsection \ref{s3.4}.
These results complete the proof of Corollary~\ref{thm:classification}.
Finally, we illustrates how the Poincar\'e inequality
\eqref{1-1} recovers the classical one in Subsection \ref{s3.5}.

By translation and dilation, as in the proof of
Theorem~\ref{thm:truncated-differences}, we conclude that it
suffices to prove \eqref{1-1} and the assertions concerning its
validity and least constant on the unit cube $Q_0:=(0,1)^N$.
We work on $Q_0$ in the following proofs.

\subsection{The Poincar\'e Inequality for
\texorpdfstring{$\gamma\in(-\infty,-1]$}{gamma in (-infinity, -1]}}\label{s3.1}

In this subsection, we use \eqref{eq:truncation-negative} to obtain
the Poincar\'e inequality \eqref{1-1} for
$\gamma\in(-\infty,-1]$.

\begin{corollary}\label{thm:unified-negative}
Let $N\in\mathbb N$, $p\in[1,\infty)$, and $\gamma\in(-\infty,-1]$.
Then there exists a constant $c_{N,p}\in(0,\infty)$, depending only on
$N$ and $p$, such that, for any $\lambda\in(0,\infty)$ and
any measurable function $g:Q_0\to\mathbb R$,
\begin{equation*}
\int_{Q_0}\int_{Q_0}|g(x)-g(y)|^p\,dx\,dy
\leq c_{N,p}8^{|\gamma|}\lambda^p
\left(1+
\iint\nolimits_{E_{\lambda,\frac{\gamma}{p}}[g;Q_0]}
|x-y|^{\gamma-N}\,dx\,dy\right).
\end{equation*}
\end{corollary}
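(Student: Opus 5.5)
The plan is to split each difference into the part above the threshold and the part below it, and to estimate the excess by Theorem~\ref{thm:truncated-differences}(i). Fix $\gamma\in(-\infty,-1]$ and set $\beta:=1+\frac{\gamma}{p}$. For $x,y\in Q_0$, write $s:=|g(x)-g(y)|$ and $a:=\lambda|x-y|^\beta$, so that $s\le (s-a)_++a$. The elementary inequality $(u+v)^p\le 2^{p-1}(u^p+v^p)$ then gives
$$
|g(x)-g(y)|^p\le 2^{p-1}\left(|g(x)-g(y)|-\lambda|x-y|^{\beta}\right)_+^p+2^{p-1}\lambda^p|x-y|^{p+\gamma}.
$$
We integrate this over $Q_0\times Q_0$ and treat the two terms separately.

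\emph{First term.} By \eqref{eq:truncation-negative} on $Q_0$, this term is bounded by a multiple of $\Phi^\gamma_{\lambda,p}(g;Q_0)$, which equals $\lambda^p\iint_{E_{\lambda,\gamma/p}[g;Q_0]}|x-y|^{\gamma-N}\,dx\,dy$. To get the stated dependence on $\gamma$, I will use the explicit constant from the proof rather than an unspecified one. The remark following the proof of Theorem~\ref{thm:truncated-differences} shows that \eqref{eq:truncated-increment-bound} holds with the factor $2^{|\gamma|}$ times a constant depending only on $N$ and $p$. The change of variables $y=x+h$ converts that bound into exactly the double integral over $Q_0\times Q_0$.

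\emph{Second term.} We need to bound $\lambda^p\int_{Q_0}\int_{Q_0}|x-y|^{p+\gamma}\,dx\,dy$. This is the main obstacle: when $p+\gamma\le -N$, the integral diverges, so the naive bound fails for very negative $\gamma$. I would try instead a case split on the size of $|x-y|^\beta$.

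\emph{Case (a): large differences.} Here $|g(x)-g(y)|>\lambda|x-y|^\beta$, that is, $(x,y)\in E:=E_{\lambda,\gamma/p}[g;Q_0]$. On $E$ we have $|x-y|\le\sqrt N$, so
$$
|x-y|^{p+\gamma}=|x-y|^{p+N}\,|x-y|^{\gamma-N}\le N^{(p+N)/2}|x-y|^{\gamma-N}.
$$
This piece is therefore controlled by the energy integral directly.

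\emph{Case (b): small differences.} Off $E$ we have $|g(x)-g(y)|\le\lambda|x-y|^\beta$, so the raw difference can be estimated directly without the truncation. The trouble is that $|x-y|^\beta$ is unbounded for $\beta<0$ as $|x-y|\to0$, so this pointwise bound alone does not help. Instead I would compare with a third point via the triangle inequality,
$$
|g(x)-g(y)|^p\le 2^{p-1}\bigl(|g(x)-g(z)|^p+|g(z)-g(y)|^p\bigr),
$$
averaging over $z$ in a region with $|x-z|,|y-z|\gtrsim 1$. The goal is to reduce to pairs at distance comparable to $1$, where $\lambda^p|x-z|^{p+\gamma}\lesssim 8^{|\gamma|}\lambda^p$, since such distances are bounded below by a fixed fraction of the cube size.

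Concretely, I would fix two disjoint subcubes $A,B\subset Q_0$ at mutual distance at least $1/4$ and first bound $\int_{Q_0}\int_{Q_0}|g(x)-g(y)|^p$ by a multiple of integrals of the form $\int_{Q_0}\int_{A}|g(x)-g(z)|^p$ and $\int_{Q_0}\int_{B}|g(x)-g(z)|^p$. For each $x$, at least one of $A$ or $B$ lies at distance at least $1/8$ from $x$, and I would use that one. On those pairs, the splitting $s\le(s-a)_++a$ gives $a^p\le\lambda^p 8^{|p+\gamma|}$-type bounds, which yields the $8^{|\gamma|}\lambda^p$ term. The truncated part is again controlled by \eqref{eq:truncation-negative} with the factor $2^{|\gamma|}$. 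Combining the two parts then gives the stated $c_{N,p}8^{|\gamma|}\lambda^p(1+\cdots)$.
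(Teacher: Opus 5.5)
Your plan is the paper's: reduce to pairs at distance at least $1/8$ through an intermediate point, apply the splitting $|\Delta_hg|^p\le 2^{p-1}[(|\Delta_hg|-\lambda|h|^\beta)_+^p+\lambda^p|h|^{p+\gamma}]$ only on those pairs, and then bound the two parts. The truncated part is handled by \eqref{eq:truncated-increment-bound}, with the factor $2^{|\gamma|}$ after rescaling to $g/\lambda$. The threshold part uses $|h|^{p+\gamma}\le N^{p/2}8^{|\gamma|}$ for $1/8\le|h|\le\sqrt N$. The Case (a)/(b) detour is unnecessary once you restrict to far pairs.

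The one step that does not work as written is the reduction to far pairs. If you choose $A$ or $B$ according to $x$ alone, only the leg $(x,z)$ is guaranteed to be long. The other leg $(z,y)$ is short whenever $y$ lies within $1/8$ of the chosen subcube, for instance when $x$ is near $B$ and $y\in A$. After the splitting, that leg contributes $\lambda^p\int_A|z-y|^{p+\gamma}\,dz$, which is infinite for $y\in A$ when $p+\gamma\le -N$. This is exactly the obstruction you set out to avoid, so the intermediate point must be far from \emph{both} endpoints.

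The paper achieves this by averaging $z$ over $Z_{x,y}:=Q_0\setminus[B(x,\frac18)\cup B(y,\frac18)]$, which has measure at least $1-2\cdot 4^{-N}\ge\frac12$. This gives in one line $\int_{Q_0}\int_{Q_0}|g(x)-g(y)|^p\,dx\,dy\le 2^{p+1}\iint_{|x-z|\ge 1/8}|g(x)-g(z)|^p\,dx\,dz$.

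Your subcube device can also be repaired in either of two ways:
\begin{enumerate}[(a)]
\item Use three pairwise $\frac14$-separated subcubes. Each point lies within $\frac18$ of at most one of them, so for every pair $(x,y)$ some subcube is far from both.
\item Route the bad pairs through both subcubes, $x\to z\in A\to w\in B\to y$. The middle leg automatically has length at least $\frac14$.
\end{enumerate}
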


\begin{proof}
Applying the desired estimate in the present corollary
with $\lambda=1$ to $\frac{g}{\lambda}$,
we conclude that it suffices to consider $\lambda=1$.
Let $\beta:=1+\frac{\gamma}{p}$.
We retain the notation $Q_{0,h}$, $\Delta_hg$, and $\mathcal E$
from the proof of Theorem~\ref{thm:truncated-differences}.
Let
$$
\mathcal A:=\left\{h\in(-1,1)^N:|h|\geq\frac{1}{8}\right\}.
$$
For $x,y\in Q_0$, define
$$
Z_{x,y}:=Q_0\setminus
\left[B\left(x,\frac{1}{8}\right)\cup B\left(y,\frac{1}{8}\right)\right].
$$
Since a ball of radius $\frac{1}{8}$ is contained in a cube of edge length
$\frac{1}{4}$, we infer that
$$
|Z_{x,y}|\geq1-2\cdot4^{-N}\geq\frac{1}{2}.
$$
For any $z\in Z_{x,y}$, we have
$$
|g(x)-g(y)|^p
\leq2^{p-1}\left[|g(x)-g(z)|^p+|g(z)-g(y)|^p\right].
$$
Averaging over $z\in Z_{x,y}$ and then integrating in $x,y\in Q_0$
give
\begin{align}
\int_{Q_0}\int_{Q_0}|g(x)-g(y)|^p\,dx\,dy
&\leq2^{p+1}
\iint_{\genfrac{}{}{0pt}{}{x,z\in Q_0}{|x-z|\geq\frac{1}{8}}}
|g(x)-g(z)|^p\,dx\,dz
\notag\\
&=2^{p+1}\int_{\mathcal A}\int_{Q_{0,h}}
|\Delta_hg(x)|^p\,dx\,dh.
\label{eq:far-increments}
\end{align}
Note that
$$
|\Delta_hg(x)|^p
\leq2^{p-1}\left[
\left(|\Delta_hg(x)|-|h|^\beta\right)_+^p
+|h|^{p+\gamma}\right].
$$
Moreover, since $\frac{1}{8}\leq|h|\leq\sqrt N$ on $\mathcal A$ and
since $\gamma\leq-1$, it follows that
$$
|h|^{p+\gamma}=|h|^p|h|^\gamma
\leq N^{\frac{p}{2}}8^{|\gamma|},\quad \forall\,h\in\mathcal A.
$$
Using these estimates together with \eqref{eq:far-increments} and
\eqref{eq:truncated-increment-bound}, we obtain
\begin{align*}
\int_{Q_0}\int_{Q_0}|g(x)-g(y)|^p\,dx\,dy
&\lesssim2^{|\gamma|}\mathcal E
+\int_{\mathcal A}\int_{Q_{0,h}}|h|^{p+\gamma}\,dx\,dh\\
&\lesssim2^{|\gamma|}\mathcal E+8^{|\gamma|}
\leq8^{|\gamma|}(1+\mathcal E),
\end{align*}
where the implicit positive constants depend only on $N$ and $p$.
This completes the proof of Corollary \ref{thm:unified-negative}.
\end{proof}

\begin{remark}\label{rem:elementary-nguyen-proof}
Brezis and Nguyen \cite[Remark~2]{BrezisNguyenVMO} noted that
the proof of \cite[Proposition~1]{BrezisNguyenVMO} relies on the
technique developed by Bourgain and Nguyen \cite{BourgainNguyen2006} and
the John--Nirenberg inequality, and then asked for a more elementary argument.
For $\gamma=-p$, Corollary~\ref{thm:unified-negative} reduces to
\cite[Proposition~1]{BrezisNguyenVMO} with an elementary proof based on dyadic iteration for
differences along line segments, without using the John--Nirenberg
inequality. This answers the aforementioned question of Brezis
and Nguyen.
\end{remark}

\subsection{\texorpdfstring{Failure of the Poincar\'e Inequality for $\gamma\in(-1,\infty)$}{Failure of the inequality for gamma greater than -1}}
\label{sec:failure}

This subsection is devoted to showing that \eqref{1-1} fails for $\gamma\in(-1,\infty)$.
We treat $\gamma\in(0,\infty)$ by letting $\lambda\to0^+$ and
$\gamma\in(-1,0]$ by constructing a counterexample.

\begin{proposition}
\label{prop:positive-gamma-global}
Let $\gamma\in(0,\infty)$ and $g\in L^p(Q_0)$.  Then no finite constant can make
\eqref{1-1} valid for any $\lambda\in(0,\infty)$ unless $g$ is  constant almost everywhere on $Q_0$.
\end{proposition}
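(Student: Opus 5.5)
The plan is to let $\lambda\to0^+$ in \eqref{1-1} and show that the right-hand side tends to $0$ when $\gamma\in(0,\infty)$ and $g\in L^p(Q_0)$. Then $\int_{Q_0}\int_{Q_0}|g(x)-g(y)|^p\,dx\,dy=0$, which forces $g$ to be constant almost everywhere. The converse direction is trivial: if $g$ is constant a.e., then the left-hand side vanishes, so \eqref{1-1} holds with any constant.

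The first term $\lambda^p\ell(Q_0)^{p+\gamma}=\lambda^p$ clearly tends to $0$. The main step is to show that
$$\Phi_{\lambda,p}^\gamma(g;Q_0)=\lambda^p\iint_{E_{\lambda,\gamma/p}[g;Q_0]}|x-y|^{\gamma-N}\,dx\,dy\to0.$$
Since $\gamma>0$, the kernel $|x-y|^{\gamma-N}$ is integrable on $Q_0\times Q_0$. Indeed,
$$\int_{Q_0}\int_{Q_0}|x-y|^{\gamma-N}\,dx\,dy\le\int_{B(\mathbf 0,\sqrt N)}|h|^{\gamma-N}\,dh<\infty.$$
Hence we obtain the crude bound $\Phi_{\lambda,p}^\gamma(g;Q_0)\le\lambda^p C_{N,\gamma}$, and this tends to $0$ as $\lambda\to0^+$. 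We do not even need $g\in L^p$ for this step. The hypothesis $g\in L^p$ is used only so that the left-hand side is finite; if it were infinite, the conclusion would hold vacuously or the inequality would fail outright.

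It follows that if \eqref{1-1} held with a finite constant $c$ for all $\lambda\in(0,\infty)$, then
$$\int_{Q_0}\int_{Q_0}|g(x)-g(y)|^p\,dx\,dy\le c\,(1+C_{N,\gamma})\lambda^p\to0.$$
Therefore $g(x)=g(y)$ for almost every $(x,y)$, and by Fubini's theorem $g$ equals a constant almost everywhere. There is no real obstacle here. The only point to check is the integrability of $|x-y|^{\gamma-N}$ on the cube, which uses $\gamma>0$ via polar coordinates, since $\int_0^{\sqrt N}r^{\gamma-1}\,dr<\infty$. Consequently, for any nonconstant $g$, for instance $g(x)=x_1$, no finite constant works, which also gives the first assertion of Corollary~\ref{thm:classification}(i) for $\gamma>0$.
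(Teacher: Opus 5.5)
Your proposal is correct and follows essentially the same argument as the paper. Both use the integrability of $|x-y|^{\gamma-N}$ on $Q_0\times Q_0$ for $\gamma>0$ to bound $\Phi_{\lambda,p}^\gamma(g;Q_0)\lesssim\lambda^p$, then let $\lambda\to0^+$ in \eqref{1-1} to force $\int_{Q_0}\int_{Q_0}|g(x)-g(y)|^p\,dx\,dy=0$, so that $g$ is constant almost everywhere.
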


\begin{proof}
If $\gamma\in(0,\infty)$, then the function
$(x, y)\mapsto \lvert x-y\rvert ^{\gamma-N}$ is integrable on
$Q_0\times Q_0$, and hence,
for any $\lambda\in(0,\infty)$,
$$\lambda^p \iint\nolimits_{E_{\lambda,\frac{\gamma}{p}}[g;Q_0]}\lvert x-y\rvert ^{\gamma-N}\,dx\,dy\leq \lambda^p
 \iint_{Q_0\times Q_0}\lvert x-y\rvert ^{\gamma-N}\,dx\,dy\lesssim\lambda^p.
$$
If \eqref{1-1} holds, then we obtain
  $$ \int_{Q_0}\int_{Q_0} |g(x)-g(y)|^p\,dx\,dy \lesssim\lambda^p\to 0
  \quad\text{as }\lambda\to 0^+,$$
where the implicit positive constant is independent of $\lambda$. Consequently,
  $$  \int_{Q_0}\int_{Q_0} |g(x)-g(y)|^p\,dx\,dy=0.$$
  But this  is impossible unless $g$ is constant almost everywhere on $Q_0$.
This completes the proof of Proposition \ref{prop:positive-gamma-global}.
\end{proof}

\begin{remark}
The obstruction in the proof of Proposition \ref{prop:positive-gamma-global}
arises as $\lambda\to 0^+$, suggesting that an asymptotic
substitute for \eqref{1-1} may hold for sufficiently large $\lambda$.
Indeed,
Theorem~\ref{thm:truncated-differences}(ii) gives a Poincar\'e estimate
for sufficiently large thresholds when $\gamma\in(0,\infty)$.
More precisely, for any $g\in L^p(Q_0)$, there exists
$\lambda_0=\lambda_0(g,N,p,\gamma)\in(0,\infty)$ such that,
for any $\lambda\in[\lambda_0,\infty),$
\begin{equation}\label{eq:positive-gamma-large-lambda}
\int_{Q_0}\int_{Q_0}|g(x)-g(y)|^p\,dx\,dy
\lesssim\Phi_{\lambda,p}^\gamma(g;Q_0),
\end{equation}
where the implicit positive constant depends only on $N$, $p$, and $\gamma$.
To prove it, by the monotone convergence theorem, the left-hand side of
\eqref{eq:truncation-positive} with $Q=Q_0$ increases to
$$\int_{Q_0}\int_{Q_0}|g(x)-g(y)|^p\,dx\,dy$$
as $\lambda\to\infty$. If $g$ is not constant almost everywhere,
this limit is positive and
finite, so the truncated integral is at least half of its limit for
sufficiently large $\lambda$. Applying \eqref{eq:truncation-positive},
we obtain \eqref{eq:positive-gamma-large-lambda}.
\end{remark}

\begin{proposition}
\label{prop:minus-one-sharp-global}
If $\gamma\in(-1,0]$ and $p\in[1,\infty)$,  then no finite constant can make
\eqref{1-1} valid for any $\lambda\in(0,\infty)$ and any measurable $g: Q_0\to\mathbb R$.
\end{proposition}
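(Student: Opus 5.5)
The plan is to exhibit, for each fixed $\gamma\in(-1,0]$, a family of functions whose oscillation on $Q_0$ blows up while the right-hand side of \eqref{1-1} stays bounded at a fixed threshold. The natural candidate is a jump function. Fix $\lambda=1$ and, for $A\in(0,\infty)$, let
$$
g_A(x):=A\,\mathbf 1_{\{x_1>\frac12\}}(x),\qquad x\in Q_0 .
$$

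First, the left-hand side of \eqref{1-1} is computed exactly:
$$
\int_{Q_0}\int_{Q_0}|g_A(x)-g_A(y)|^p\,dx\,dy=2\cdot\tfrac14A^p=\tfrac12A^p .
$$
This tends to infinity as $A\to\infty$.

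Second, we bound the energy uniformly in $A$. The integrand of $\Phi_{1,p}^\gamma(g_A;Q_0)$ vanishes unless $g_A(x)\neq g_A(y)$, which forces $x$ and $y$ to lie on opposite sides of the hyperplane $\{x_1=\frac12\}$. We simply discard the condition $A>|x-y|^{1+\gamma/p}$ and estimate
$$
\Phi_{1,p}^\gamma(g_A;Q_0)\le\iint_{\{x_1<\frac12<y_1\}\cup\{y_1<\frac12<x_1\}}|x-y|^{\gamma-N}\,dx\,dy=:C_{N,\gamma}.
$$
We then show $C_{N,\gamma}<\infty$ exactly when $\gamma>-1$. Fix $x$ with $d:=\frac12-x_1>0$. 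Every admissible $y$ satisfies $|x-y|\ge d$, and $|x-y|\le\sqrt N$, so
$$
\int_{\{y\in Q_0:\,y_1>\frac12\}}|x-y|^{\gamma-N}\,dy\lesssim\int_d^{\sqrt N}r^{\gamma-1}\,dr\lesssim
\begin{cases}d^{\gamma}&\text{if }\gamma\in(-1,0),\\ 1+\log\frac1d&\text{if }\gamma=0.\end{cases}
$$
Integrating in $d\in(0,\frac12)$ gives a finite quantity because $\gamma>-1$; the logarithm is also integrable when $\gamma=0$. The pairs with $x_1>\frac12>y_1$ are handled symmetrically.

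Third, we combine the two computations. If \eqref{1-1} held with a finite constant $c$ for all $\lambda$ and all $g$, then taking $\lambda=1$ and $g=g_A$ would give
$$
\tfrac12A^p\le c\,(1+C_{N,\gamma})\quad\text{for every }A>0,
$$
which is absurd as $A\to\infty$.

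The only step needing care is the finiteness of $C_{N,\gamma}$, in particular the logarithmic case $\gamma=0$. The one-dimensional computation above settles it, and it also shows why $\gamma=-1$ is the borderline: there the integral $\int_0 d^{-1}\,dd$ diverges, which is consistent with Corollary~\ref{thm:unified-negative}.
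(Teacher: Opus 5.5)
Your proof is correct and is essentially the paper's argument. The paper fixes $g=\mathbf 1_{Q'}$ with $Q'=(\frac14,\frac34)^N$ and lets $\lambda\to0^+$, whereas you fix $\lambda=1$ and let the jump height $A\to\infty$; these are the same by the homogeneity $\Phi_{1,p}^\gamma(Ag;Q_0)=A^p\Phi_{1/A,p}^\gamma(g;Q_0)$. Your remaining differences, a hyperplane jump and a polar-coordinate bound in place of the paper's layer estimate $|G_r|\lesssim r^{N+1}$, are cosmetic, and both bounds on the threshold-free energy rely on $\gamma>-1$.
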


\begin{proof}
Let $Q':=(\frac{1}{4},\frac{3}{4})^N\subset Q_0$ and
 $g:=\mathbf{1}_{Q'}$.
 Assume otherwise that \eqref{1-1} holds for some $\gamma\in(-1,0]$.
Clearly,
\begin{align*}
 \int_{Q_0}\int_{Q_0} |g(x)-g(y)|^p\,dx\,dy &=2|Q'\times (Q_0\setminus Q')|\\
&=2^{-N+1}(1-2^{-N})\ge 2^{-N}.
\end{align*}
Since
$$
E_{\lambda,\frac{\gamma}{p}}[g;Q_0]
\subset\left[Q'\times(Q_0\setminus Q')\right]
\cup\left[(Q_0\setminus Q')\times Q'\right],
$$
we deduce that
\begin{align*}\lambda^p\iint\nolimits_{E_{\lambda,\frac{\gamma}{p}}[g;Q_0]} |x-y|^{\gamma-N}\,dx\,dy \leq 2\lambda^p\int_{Q'}\int_{Q_0\setminus Q'}
 \lvert x-y\rvert ^{\gamma-N}\,dy\,dx.
\end{align*}
We claim this double integral in the right-hand side
 is finite. To see this, for any $r\in(0,N]$, let
$$ G_r:=\left\{(x,y):\ x\in Q',\ y\in Q_0\setminus Q', \ |x-y|\leq r\right\}.$$
Since
$$ G_r\subset \left\{ (x, y):\ x\in Q',\ \min_{z\in\partial Q'} |x-z|\leq  r,\ |x-y|\leq r\right\},$$
we infer that $|G_r|\lesssim r^{N+1}$ for any $r\in(0,N]$. Thus,
\begin{align*}\int_{Q'}\int_{Q_0\setminus Q'}
 \lvert x-y\rvert ^{\gamma-N}\,dy\,dx&\leq \sum_{j=0}^\infty \iint_{G_{2^{-j} N}\setminus G_{2^{-j-1}N}} \lvert x-y\rvert ^{\gamma-N}\,dy\,dx\\
 &\lesssim \sum_{j=0}^\infty 2^{-j(\gamma-N)} |G_{2^{-j} N}|
 \lesssim \sum_{j=0}^\infty 2^{-j(\gamma+1)}\lesssim 1.\end{align*}

Using these estimates and \eqref{1-1}, we find that, for any $\lambda\in(0,\infty)$,
$2^{-N}\lesssim\lambda^p,$
where the implicit positive constant is independent of $\lambda$.
Letting $\lambda\to 0^+$ gives a contradiction.
This completes the proof of Proposition \ref{prop:minus-one-sharp-global}.
\end{proof}

Using similar examples, we can show that
the ranges in Theorem \ref{thm:truncated-differences} are sharp, as stated in
Remark \ref{rem:truncation-sharpness}.
\begin{proof}[Proof of Remark~\ref{rem:truncation-sharpness}]
Let $g=\mathbf1_{Q'}$, where $Q'=(\frac{1}{4},\frac{3}{4})^N$.
By the estimates in the proofs of Propositions
\ref{prop:positive-gamma-global} and \ref{prop:minus-one-sharp-global},
we conclude that, for any $\gamma\in(-1,\infty)$,
$\Phi_{\lambda,p}^{\gamma}(g;Q_0)\lesssim\lambda^p\to0$
as $\lambda\to0^+,$
where the implicit positive constant is independent of $\lambda$.
The dominated convergence theorem gives
$$
\lim_{\lambda\to0^+}\int_{Q_0}\int_{Q_0}
\left(|g(x)-g(y)|-\lambda|x-y|^{1+\frac{\gamma}{p}}\right)_+^p
\,dx\,dy
=2|Q'|(1-|Q'|)>0,
$$
which rules out \eqref{eq:truncation-negative}.

For $\gamma\in(-\infty,0]$, let $g(x):=x_1$ for $x:=(x_1,\ldots,x_N)\in\mathbb{R}^N$,
and choose
$\lambda\in[(\sqrt N)^{-\frac{\gamma}{p}},\infty)$.
Since $|x-y|\leq\sqrt N$ for $x,y\in Q_0$, it follows that
$$
|g(x)-g(y)|\leq|x-y|
\leq\lambda|x-y|^{1+\frac{\gamma}{p}},\quad \forall\,x\ne y.
$$
Thus, $\Phi_{\lambda,p}^{\gamma}(g;Q_0)=0$, whereas the left-hand
side of \eqref{eq:truncation-positive} equals
$$
\int_{Q_0}\int_{Q_0}|x_1-y_1|^p\,dx\,dy
=\frac{2}{(p+1)(p+2)}>0.
$$
This rules out \eqref{eq:truncation-positive} and completes the proof.
\end{proof}

\subsection{\texorpdfstring{Dependence of the
Constant in \eqref{1-1} on $\gamma$}{Dependence of the constant on gamma}}\label{s3.3}

In this subsection, we use affine functions to prove that the least
constant in \eqref{1-1} grows at least linearly in $|\gamma|$
as $\gamma\to-\infty$.

\begin{proposition}\label{prop:constant-negative-infinity} Assume that $p\in[1,\infty)$ and  $\gamma\in(-\infty,0)$.
Let $c_{N,p,\gamma}$ denote the
least constant in  \eqref{1-1}. Then
\begin{equation}\label{5-1}
 c_{N,p,\gamma}\geq
 \frac{2\lvert \gamma\rvert }{(p+1)(p+2)K_{N,p}},
\end{equation}
where $K_{N,p}$ is as in \eqref{1.4x}.
\end{proposition}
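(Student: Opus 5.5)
We test \eqref{1-1} on the unit cube $Q_0$ with the affine functions $g_\varepsilon(x):=\varepsilon x_1$ and then let $\varepsilon\to0^+$ along a suitable family of thresholds. The left-hand side is explicit:
$$\int_{Q_0}\int_{Q_0}|g_\varepsilon(x)-g_\varepsilon(y)|^p\,dx\,dy=\frac{2\varepsilon^p}{(p+1)(p+2)}.$$
The aim is to show that, for a choice $\lambda=\lambda(\varepsilon)$ making the term $\lambda^p$ negligible, the nonlocal term satisfies $\Phi^\gamma_{\lambda,p}(g_\varepsilon;Q_0)\le(1+o(1))\frac{K_{N,p}}{|\gamma|}\varepsilon^p$. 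Dividing \eqref{1-1} by $\varepsilon^p$ and letting $\varepsilon\to0^+$ then gives \eqref{5-1}.

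The key computation is $\Phi_{\lambda,p}^\gamma(g_\varepsilon;Q_0)$. Write $y=x+h$ and $h=r\omega$ with $\omega\in\mathbb S^{N-1}$. The superlevel condition becomes $\varepsilon|\omega_1|r>\lambda r^{1+\gamma/p}$, that is, $r^{-\gamma/p}>\lambda/(\varepsilon|\omega_1|)$. Since $\gamma<0$, this is equivalent to $r>r_0(\omega):=(\lambda/(\varepsilon|\omega_1|))^{p/|\gamma|}$. Dropping the restriction $x\in Q_{0,h}$ (which only has measure $\le1$), we obtain
\begin{align*}
\Phi_{\lambda,p}^\gamma(g_\varepsilon;Q_0)
\le\lambda^p\int_{\mathbb S^{N-1}}\int_{r_0(\omega)}^\infty r^{\gamma-1}\,dr\,d\sigma(\omega)
=\frac{\lambda^p}{|\gamma|}\int_{\mathbb S^{N-1}}r_0(\omega)^{\gamma}\,d\sigma(\omega).
\end{align*}
Because $r_0^\gamma=(\varepsilon|\omega_1|/\lambda)^{p}$, this equals exactly $\frac{\varepsilon^p}{|\gamma|}K_{N,p}$, with no dependence on $\lambda$. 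We integrate over all $r>r_0$; the constraint $|h|\le\sqrt N$ only decreases the integral, so the upper bound holds.

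We therefore have, for every $\lambda,\varepsilon>0$,
$$\frac{2\varepsilon^p}{(p+1)(p+2)}\le c_{N,p,\gamma}\Big[\lambda^p+\frac{K_{N,p}}{|\gamma|}\varepsilon^p\Big].$$
Letting $\lambda\to0^+$ with $\varepsilon$ fixed (no coupling is even needed) and dividing by $\varepsilon^p$ gives \eqref{5-1}. In fact one can simply take $\varepsilon=1$ and $\lambda\to0^+$.

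The only delicate step is the polar-coordinates evaluation. It requires that the enlarged integration domain (all $h\in\mathbb R^N$, $x$-measure at most $1$) still has a finite integral, which holds because $\gamma<0$ makes $r^{\gamma-1}$ integrable at infinity. Near $r=0$ there is no problem, since that region is excluded by the cutoff $r>r_0$. One must also handle the set $\{\omega_1=0\}$, where $r_0=\infty$ and the contribution is zero. With this, the bound is exact and the proposition follows.
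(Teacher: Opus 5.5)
Your proof is correct. It shares the paper's overall strategy: test \eqref{1-1} on $Q_0$ with $g(x)=x_1$ and let $\lambda\to0^+$. Where you differ is in how you control the nonlocal term.

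The paper invokes the exact limit \eqref{aff:eq:affine-limit}, namely $\lim_{\lambda\to0^+}\Phi^\gamma_{\lambda,p}(x_1;Q_0)=K_{N,p}/|\gamma|$. It obtains this from an abstract limit theorem of Dom\'{\i}nguez and Milman, after checking that theorem's hypotheses.

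You instead use $|Q_{0,h}|\le1$ and integrate in polar coordinates. This gives the non-asymptotic estimate $\Phi^\gamma_{\lambda,p}(x_1;Q_0)\le K_{N,p}/|\gamma|$ for every $\lambda\in(0,\infty)$. A lower bound for $c_{N,p,\gamma}$ only requires an upper bound for $\Phi^\gamma_{\lambda,p}$, so this suffices. Your route is self-contained, avoids the external theorem, and is uniform in $\lambda$. As you noticed, the rescaling by $\varepsilon$ is superfluous.

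The paper's route gives something extra: the matching lower bound, so the limit is exactly $K_{N,p}/|\gamma|$. Hence the affine test function cannot give a better constant than \eqref{5-1}. The paper reuses this limit for the upper bound on the cell constant in Corollary~\ref{cor:kappa-all-negative}. There, only an upper bound is needed, so your computation would also suffice: replace $r>r_0(\omega)$ by $r<r_0(\omega)$ when $\gamma>0$, and use the rotation invariance of $\sigma$ to pass to the direction $(1,\ldots,1)/\sqrt N$.
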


\begin{proof}
We first recall the following formula for any
$\gamma\in\mathbb R\setminus\{0\}$ and $e\in\mathbb S^{N-1}$.
Let $\beta:=1+\frac{\gamma}{p}$ and $Q_0:=(0,1)^N$.
We write $(x,y)$ for the Euclidean inner product.
For $\lambda\in(0,\infty)$, define
$$
A_{\lambda,e}:=\left\{(x,y)\in Q_0\times Q_0:x\ne y,\
\frac{|(x,e)-(y,e)|}{|x-y|^\beta}>\lambda\right\}.
$$
Then
\begin{equation}\label{aff:eq:affine-limit}
 \lim_{\lambda\to L_\gamma}
 \lambda^p\iint\nolimits_{A_{\lambda,e}}|x-y|^{\gamma-N}\,dx\,dy
 =\frac{K_{N,p}}{|\gamma|}.
\end{equation}
From
\cite[Theorem~9.1(i)]{DominguezMilman}, with polar coordinates
$y=x+t\omega$, $X=Q_0\times\mathbb S^{N-1}$ [equipped with measure
$dm(x,\omega)=dx\,d\sigma(\omega)$],
$T_tF(x,\omega):=\mathbf{1}_{Q_0}(x+t\omega)F(x,\omega)$ for any $t\in(0,\infty)$,
and $f(x,\omega):=|\langle e,\omega\rangle|$, we deduce that
\eqref{aff:eq:affine-limit} holds.
Indeed, $\sup_{t\in(0,\infty)}|T_tf|=f\in L^p(X,m)$ and
$T_tf\to f$ pointwise as $t\to0^+$, so the hypotheses of that theorem
are satisfied.

Returning to $\gamma\in(-\infty,0)$, we specify $e=(1,0,\ldots, 0)$ and define  $g(x):=\langle x, e\rangle=x_1$ for $x:=(x_1,\ldots,x_N)\in \mathbb R^N$.
Then
$$ \int_{Q_0}\int_{Q_0}|g(x)-g(y)|^p \,dx\,dy =\int_0^1\int_0^1 |s-t|^p\,ds\,dt =\frac{2}{(p+1)(p+2)}.$$
Applying \eqref{1-1} to $g$ and using \eqref{aff:eq:affine-limit}, we obtain
\begin{align*}\frac{2}{(p+1)(p+2)}\leq c_{N,p,\gamma}  \lim_{\lambda\to 0^+}
 \lambda^p\iint\nolimits_{A_{\lambda,e}}|x-y|^{\gamma-N}\,dx\,dy=c_{N,p,\gamma}  \frac{K_{N,p}}{|\gamma|},
\end{align*}
implying the desired inequality \eqref{5-1}.
This completes the proof of Proposition \ref{prop:constant-negative-infinity}.
\end{proof}

\subsection{Positivity of Cell Constants \texorpdfstring{$\kappa_{N,p,\gamma}$}{kappa(N,p,gamma)}}\label{s3.4}

We aim to apply the complementary truncation estimates
to establish the positivity of the cell constant for
$\gamma\in\mathbb R\setminus(-1,0]$ in this subsection.

Let $h_0(x):=\frac{x_1+\cdots+x_N}{\sqrt N}$ for $x:=(x_1,\ldots,x_N)\in Q_0$.
Recall that the \emph{cell constant} is defined by
\begin{equation}\label{6-1-0}
\kappa_{N,p,\gamma}
:=\inf_{\{h_\lambda\}}
\left[\varliminf_{\lambda\to L_\gamma}
\Phi_{\lambda,p}^{\gamma}(h_\lambda;Q_0)\right],
\end{equation}
where the infimum is taken over all families of measurable functions
$h_\lambda:Q_0\to\mathbb R$ converging to $h_0$ in measure on $Q_0$.

\begin{corollary}\label{cor:kappa-all-negative}
Let $N\in\mathbb N$ and $p\in[1,\infty)$.
For any $\gamma\in\mathbb R\setminus(-1,0]$,
$$
0<\kappa_{N,p,\gamma}\leq\frac{K_{N,p}}{|\gamma|},
$$
where $K_{N,p}$ is as in \eqref{1.4x}.
\end{corollary}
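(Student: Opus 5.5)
The two bounds are handled separately.

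\emph{Upper bound.} Take the trivial family $h_\lambda:=h_0$ for every $\lambda$, which converges to $h_0$ in measure. Since $h_0(x)=(x,e)$ with $e:=(1,\ldots,1)/\sqrt N\in\mathbb S^{N-1}$, we have $E_{\lambda,\frac{\gamma}{p}}[h_0;Q_0]=A_{\lambda,e}$. Formula \eqref{aff:eq:affine-limit} then gives $\Phi^\gamma_{\lambda,p}(h_0;Q_0)\to K_{N,p}/|\gamma|$ as $\lambda\to L_\gamma$. Hence $\kappa_{N,p,\gamma}\le K_{N,p}/|\gamma|$.

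\emph{Lower bound.} Fix any family $\{h_\lambda\}$ with $h_\lambda\to h_0$ in measure. Passing to a sequence $\lambda_j\to L_\gamma$ realizing the $\varliminf$, we may assume that $h_{\lambda_j}\to h_0$ almost everywhere on $Q_0$, and hence almost everywhere on $Q_0\times Q_0$ in both variables. We then apply Theorem~\ref{thm:truncated-differences} on $Q=Q_0$ and use Fatou's lemma.

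For $\gamma\in(-\infty,-1]$, use \eqref{eq:truncation-negative}. As $\lambda_j\to0^+$, the integrand
$$
\left(|h_{\lambda_j}(x)-h_{\lambda_j}(y)|-\lambda_j|x-y|^{1+\frac{\gamma}{p}}\right)_+^p
$$
converges almost everywhere to $|h_0(x)-h_0(y)|^p$, because $x\ne y$ almost everywhere and $|x-y|^{1+\gamma/p}$ is finite there. For $\gamma\in(0,\infty)$, use \eqref{eq:truncation-positive}. As $\lambda_j\to\infty$, the integrand
$$
\min\left\{|h_{\lambda_j}(x)-h_{\lambda_j}(y)|^p,\lambda_j^p|x-y|^{p+\gamma}\right\}
$$
also tends almost everywhere to $|h_0(x)-h_0(y)|^p$. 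In both cases Fatou's lemma gives
$$
0<\int_{Q_0}\int_{Q_0}|h_0(x)-h_0(y)|^p\,dx\,dy\le C_{N,p,\gamma}\varliminf_{j\to\infty}\Phi^\gamma_{\lambda_j,p}(h_{\lambda_j};Q_0).
$$
The left-hand side is a positive number independent of the family, since $h_0$ is nonconstant. Taking the infimum over all families gives $\kappa_{N,p,\gamma}>0$.

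The only delicate point is the subsequence step. Convergence in measure along a continuous parameter does not directly give almost everywhere convergence. So we first select $\lambda_j$ attaining the $\varliminf$, and then a further subsequence converging almost everywhere; this does not change the $\varliminf$ value.
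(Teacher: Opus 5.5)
Your proposal is correct and follows essentially the same route as the paper: the trivial family $h_\lambda=h_0$ together with \eqref{aff:eq:affine-limit} for the upper bound, and, for the lower bound, a sequence realizing the $\varliminf$, an a.e.\ convergent subsequence, Theorem~\ref{thm:truncated-differences} on $Q_0$, and Fatou's lemma. The only difference is that the paper first replaces $h_{\lambda_n}$ by the truncations $T_m(h_{\lambda_n})$ with $m>\sqrt N$, using $\Phi_{\lambda,p}^{\gamma}(T_m(h);Q_0)\le\Phi_{\lambda,p}^{\gamma}(h;Q_0)$; you rightly omit this step, since Theorem~\ref{thm:truncated-differences} already holds for arbitrary measurable functions.
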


\begin{remark}
We compare Corollary~\ref{cor:kappa-all-negative} with the results on the cell constant recorded in \cite[Section~8]{NguyenSurvey}.
For $\gamma=-p$, Nguyen \cite{Nguyen2007,Nguyen2010} showed that
$0<\kappa_{N,p,-p}<\frac{K_{N,p}}{p}$,
and identified the $\Gamma$-limit in $L^p(\mathbb R^N)$ of the corresponding nonlocal functionals as $\kappa_{N,p,-p}\mathcal E_p(\cdot;\mathbb R^N)$.
The strict upper bound of $\kappa_{N,p,-p}$ exhibits a gap between the $\Gamma$-limit
and the pointwise limit on smooth functions.
Moreover, the exact value conjectured by Nguyen \cite{Nguyen2007} in
dimension one was determined in any dimension by Antonucci et al.\ \cite[Theorem~1.1 and (1-6)]{AntonucciEtAl}; in our notation,
$$
\kappa_{N,p,-p}=
\begin{cases}
\displaystyle\frac{K_{N,p}}{p(p-1)}\left(1-2^{1-p}\right)&\text{if }
p\in(1,\infty),\\
K_{N,1}\log 2&\text{if }p=1.
\end{cases}
$$

For $\gamma\in(-1,0)$, we infer from the
$\Gamma$-limsup statement in \cite[Theorem 29]{NguyenSurvey}
that $\kappa_{N,p,\gamma}=0$.
From \cite[Theorem~1.1]{Picenni2026}, it follows that the cell constant
is positive for $\gamma\in(-\infty,-p)$, while the case for $\gamma\in(0,\infty)$ is
contained in \cite{GobbinoPicenni,LinYangYangYuanZhang}.
These, together with
Corollary~\ref{cor:kappa-all-negative}, further imply that,
for any $p\in[1,\infty)$,
$$
\kappa_{N,p,\gamma}\in(0,\infty)
\quad\Longleftrightarrow\quad
\gamma\in\mathbb R\setminus(-1,0],
$$
giving a sharp characterization in terms of
$\gamma$ for the positivity of $\kappa_{N,p,\gamma}$
with $p\in[1,\infty)$.
\end{remark}

We now prove Corollary \ref{cor:kappa-all-negative}.
\begin{proof}[Proof of Corollary \ref{cor:kappa-all-negative}]
Let $h_\lambda\to h_0$ in measure on $Q_0$ as $\lambda\to L_\gamma$.
Choose a sequence $\{\lambda_n\}_{n\in\mathbb N}\subset(0,\infty)$
such that $\lambda_n\to L_\gamma$ and
\begin{equation*}
\varliminf_{\lambda\to L_\gamma}\Phi_{\lambda,p}^{\gamma}(h_\lambda;Q_0)
=\lim_{n\to\infty}
\Phi_{\lambda_n,p}^{\gamma}(h_{\lambda_n};Q_0).
\end{equation*}
Let $m\in(\sqrt N,\infty)$ and
$T_m(s):=\max\{-m,\min\{s,m\}\}$ for $s\in\mathbb R$.
Since $T_m(h_0)=h_0$, we deduce that
$T_m(h_{\lambda_n})\to h_0$ in measure on $Q_0$ as $n\to\infty$.
Moreover, the contraction property of $T_m$ gives
$$
\Phi_{\lambda_n,p}^{\gamma}(T_m(h_{\lambda_n});Q_0)
\leq\Phi_{\lambda_n,p}^{\gamma}(h_{\lambda_n};Q_0).
$$
Passing to a further subsequence, we may assume that
$T_m(h_{\lambda_n})\to h_0$ almost everywhere on $Q_0$.
For $x\ne y$, the threshold
$\lambda_n|x-y|^{1+\frac{\gamma}{p}}$ tends to zero when
$\gamma\in(-\infty,-1]$ and to infinity when $\gamma\in(0,\infty)$.
Thus, the integrands on the left-hand sides of \eqref{eq:truncation-negative}
and \eqref{eq:truncation-positive} in
Theorem~\ref{thm:truncated-differences}, applied to $T_m(h_{\lambda_n})$,
converge almost everywhere on $Q_0\times Q_0$ to $|h_0(x)-h_0(y)|^p$.
Applying Theorem~\ref{thm:truncated-differences} and Fatou's lemma, we obtain
\begin{align*}
0&<\int_{Q_0}\int_{Q_0}|h_0(x)-h_0(y)|^p\,dx\,dy\notag\\
&\lesssim\varliminf_{n\to\infty}
\Phi_{\lambda_n,p}^{\gamma}(T_m(h_{\lambda_n});Q_0)
\leq\varliminf_{\lambda\to L_\gamma}
\Phi_{\lambda,p}^{\gamma}(h_\lambda;Q_0),
\end{align*}
where the implicit positive constant depends only on $N$, $p$, and
$\gamma$.
Taking the infimum over admissible families, we obtain
$\kappa_{N,p,\gamma}\in(0,\infty]$.

Taking $h_\lambda=h_0$ in \eqref{6-1-0} and using
\eqref{aff:eq:affine-limit} with $e=\frac{(1,\ldots,1)}{\sqrt N}$,
we obtain
\begin{equation*}
\kappa_{N,p,\gamma}
\leq\lim_{\lambda\to L_\gamma}
\Phi_{\lambda,p}^{\gamma}(h_0;Q_0)
=\frac{K_{N,p}}{|\gamma|}.
\end{equation*}
This completes the proof of Corollary~\ref{cor:kappa-all-negative}.
\end{proof}

Using Corollary \ref{cor:kappa-all-negative}, together
with Propositions \ref{prop:positive-gamma-global}, \ref{prop:minus-one-sharp-global},
and \ref{prop:constant-negative-infinity}, we next show Corollary \ref{thm:classification}.

\begin{proof}[Proof of Corollary \ref{thm:classification}]
Propositions \ref{prop:positive-gamma-global} and \ref{prop:minus-one-sharp-global}
give (i), while Corollary~\ref{thm:unified-negative} yields (ii).
For any $M\in(1,\infty)$ and $\gamma\in[-M,-1]$, the estimate
$8^{|\gamma|}\leq8^M$ gives the upper bound in (iii).
From Proposition \ref{prop:constant-negative-infinity}, we infer that
the lower bound in (iii) holds.
Finally, Corollary \ref{cor:kappa-all-negative} gives (iv).
This completes the proof of Corollary \ref{thm:classification}.
\end{proof}

\subsection{The Classical Poincar\'e Inequality}\label{s3.5}

In this subsection, we investigate how
inequality \eqref{1-1}
recovers the classical Poincar\'e inequality on cubes.

Let $p\in[1,\infty)$ and $\gamma\in(-\infty,0)$.
Let $Q\subset\mathbb R^N$ be a cube and $g\in C^1(\overline Q)$.
Repeating the argument used in the proof of \cite[Lemma~3.3(ii)]{BSVY},
with both variables $x$ and $y$ restricted to $Q$ and the modulus of
continuity of $\nabla u$ on $\mathbb R^N$ replaced by that of
$\nabla g$ on $\overline Q$, we obtain
\begin{equation}\label{app:eq:smooth-local-limit}
\varlimsup_{\lambda\to0^+}\Phi_{\lambda,p}^{\gamma}(g;Q)
\leq\frac{K_{N,p}}{|\gamma|}\int_Q|\nabla g|^p\,dx.
\end{equation}
Here, the convexity of $Q$ ensures that the line segments used in that
argument remain in $Q$.

We now explain how Corollary~\ref{thm:classification}(ii) recovers the
classical Poincar\'e inequality. Let $p\in[1,\infty)$ and
$\gamma\in(-\infty,-1]$. For $g\in C^1(\overline{Q_0})$, letting
$\lambda\to0^+$ in \eqref{1-1} and applying \eqref{app:eq:smooth-local-limit} yield
\begin{equation}\label{app:eq:classical-poincare}
\int_{Q_0}\int_{Q_0}|g(x)-g(y)|^p\,dx\,dy
\lesssim\int_{Q_0}|\nabla g|^p\,dx,
\end{equation}
where the implicit positive constant depends only on $N$, $p$, and $\gamma$.
By the density of $C^\infty(\overline{Q_0})$ in $W^{1,p}(Q_0)$
(see, for instance, \cite[Theorem~11.35]{Leoni}), we obtain
\eqref{app:eq:classical-poincare} for any $g\in W^{1,p}(Q_0)$.

\section{Characterizations of Sobolev and BV Spaces}
\label{sec:gamma-applications}

The purpose of this section is to present a unified proof of known
$\Gamma$-liminf lower bounds and liminf characterizations of Sobolev
and $\mathrm{BV}$ spaces for negative and positive parameters.
We recall the relevant results of Nguyen \cite{Nguyen2007,Nguyen2010,NguyenSurvey},
Gobbino and Picenni \cite{GobbinoPicenni},
Lin et al.\ \cite{LinYangYangYuanZhang,LinEtAlFullRange2026}, and Picenni \cite{Picenni2026}.
Corollary~\ref{app:thm:gamma-coercivity} follows directly from
Theorem~\ref{thm:truncated-differences}, and we then apply it to constant
sequences. The notation $\lambda\to L_\gamma$ is as in the introduction.

\begin{proof}[Proof of Corollary \ref{app:thm:gamma-coercivity}]
Let $\{\lambda_j\}_{j\in\mathbb N}\subset(0,\infty)$ and
$\{u_j\}_{j\in\mathbb N}\subset L^1_{\mathrm{loc}}(\Omega)$ satisfy
$\lambda_j\to L_\gamma$ and $u_j\to u$ in
$L^1_{\mathrm{loc}}(\Omega)$.
If the liminf on the left-hand side is infinite, the assertion is immediate.
For $y,z\in\Omega$ with $y\ne z$, define
$$
F_j(y,z):=
\begin{cases}
\left(|u_j(y)-u_j(z)|
-\lambda_j|y-z|^{1+\frac{\gamma}{p}}\right)_+
&\text{if }\gamma\in(-\infty,-1],\\
\min\left\{|u_j(y)-u_j(z)|,
\lambda_j|y-z|^{1+\frac{\gamma}{p}}\right\}
&\text{if }\gamma\in(0,\infty),
\end{cases}
$$
and we define $F_j(y,y):=0$.
Let $U$ be any bounded open set with $\overline U\subset\Omega$, and
let $\widetilde F_j$ be defined as $F_j$ with $u_j$ replaced by $u$.
For any $a\in[0,\infty)$, the maps $s\mapsto(s-a)_+$ and
$s\mapsto\min\{s,a\}$ are $1$-Lipschitz on $[0,\infty)$. Thus,
as $j\to\infty$,
$$
\left\|F_j-\widetilde F_j\right\|_{L^1(U\times U)}
\leq2|U|\,\|u_j-u\|_{L^1(U)}\to0.
$$
Moreover, the prescribed limit of $\lambda_j$ gives
$\widetilde F_j(y,z)\to|u(y)-u(z)|$ for almost every
$(y,z)\in U\times U$,
$0\leq\widetilde F_j(y,z)\leq|u(y)-u(z)|,$
and
$$
\int_U\int_U|u(y)-u(z)|\,dy\,dz
\leq2|U|\,\|u\|_{L^1(U)}<\infty.
$$
By these estimates and the dominated convergence theorem,
we conclude that
\begin{equation}\label{eq:truncations-recover-differences}
F_j\to|u(y)-u(z)|
\quad\text{in }L^1(U\times U).
\end{equation}
Let $\Omega'$ be a bounded open set with
$\overline{\Omega'}\subset\Omega$, and choose a bounded open set $U$
such that $\overline{\Omega'}\subset U$ and $\overline U\subset\Omega$.
Fix a nonnegative $\rho\in C_{\rm c}^\infty((-1,1)^N)$ with
$\int\rho\,dx=1$.
For sufficiently small $\varepsilon\in(0,\infty)$, let
$\rho_\varepsilon(x):=\varepsilon^{-N}\rho(\varepsilon^{-1}x)$
and $R_\varepsilon(x):=x+(-\varepsilon,\varepsilon)^N$
so that $R_\varepsilon(x)\subset U$ for $x\in\Omega'$.
From \eqref{eq:truncations-recover-differences}, we deduce that, for
any fixed such $\varepsilon$,
\begin{align*}
&\sup_{x\in\Omega'}
\left|
\fint_{R_\varepsilon(x)}\fint_{R_\varepsilon(x)}
\left[F_j(y,z)-|u(y)-u(z)|\right]\,dy\,dz
\right|\\
&\qquad\leq(2\varepsilon)^{-2N}
\left\|F_j-|u(y)-u(z)|\right\|_{L^1(U\times U)}
\to0
\end{align*} as $j\to\infty$.
Since $\int\nabla\rho_\varepsilon\,dx={\bf0}$, we infer that
$$
|\nabla(\rho_\varepsilon*u)(x)|
\lesssim\varepsilon^{-1}
\fint_{R_\varepsilon(x)}\fint_{R_\varepsilon(x)}
|u(y)-u(z)|\,dy\,dz.
$$
Using these estimates, the triangle inequality in $L^p(\Omega')$, and
Theorem~\ref{thm:truncated-differences}, we obtain
\begin{align*}
&\int_{\Omega'}|\nabla(\rho_\varepsilon*u)|^p\,dx\\
&\quad\lesssim\varepsilon^{-p}\lim_{j\to\infty}
\int_{\Omega'}
\left[
\fint_{R_\varepsilon(x)}\fint_{R_\varepsilon(x)}
F_j(y,z)\,dy\,dz
\right]^p\,dx\\
&\quad\leq\varepsilon^{-p}\varliminf_{j\to\infty}
\int_{\Omega'}\fint_{R_\varepsilon(x)}\fint_{R_\varepsilon(x)}
F_j(y,z)^p\,dy\,dz\,dx\\
&\quad\lesssim\varepsilon^{-N}\varliminf_{j\to\infty}
\int_{\Omega'}\Phi_{\lambda_j,p}^{\gamma}
(u_j;R_\varepsilon(x))\,dx\lesssim\varliminf_{j\to\infty}
\Phi_{\lambda_j,p}^{\gamma}(u_j;\Omega),
\end{align*}
where the last step used Fubini's theorem and the fact that, for any
fixed $y,z\in\Omega$, the set of centers $x$ satisfying
$y,z\in R_\varepsilon(x)$ has measure at most $(2\varepsilon)^N$,
and where the implicit positive constants depend only on $N$, $p$, and
$\gamma$.
By the convergence $\rho_\varepsilon*u\to u$ in $L^1(\Omega')$, the
weak compactness of the gradients and the weak lower semicontinuity of the
$L^p$ norm when $p\in(1,\infty)$
(see \cite[Corollary~A.65 and Proposition~A.67]{Leoni}), or the lower
semicontinuity of the total variation when $p=1$
(see \cite[Exercise~14.3]{Leoni}), we conclude that, as $\varepsilon\to0^+$,
$$
\mathcal E_p(u;\Omega')
\lesssim\varliminf_{j\to\infty}
\Phi_{\lambda_j,p}^{\gamma}(u_j;\Omega),
$$
where the implicit positive constant is independent of $\Omega'$.
Exhausting $\Omega$ by relatively compact open sets gives
the desired conclusion. This completes the proof of
Corollary~\ref{app:thm:gamma-coercivity}.
\end{proof}

Applying Corollary~\ref{app:thm:gamma-coercivity} to a constant
sequence and using the upper bounds in
\cite[Theorems~1.3(i) and 1.4(i)]{BSVY}, we immediately
obtain the following
liminf characterizations on $\mathbb R^N$.

\begin{corollary}
Let $p\in[1,\infty)$, $\gamma\in\mathbb R\setminus(-1,0]$, and
$u\in L^1_{\mathrm{loc}}(\mathbb R^N)$. Then
\begin{equation}\label{app:eq:liminf-lower-bound}
\mathcal E_p(u;\mathbb R^N)
\lesssim\varliminf_{\lambda\to L_\gamma}\Phi_{\lambda,p}^\gamma(u;\mathbb R^N),
\end{equation}
where the implicit positive constant depends only on $N$, $p$, and $\gamma$.
In particular, if the right-hand side of \eqref{app:eq:liminf-lower-bound}
is finite, then $u\in\dot W^{1,p}(\mathbb R^N)$ when $p\in(1,\infty)$
and $u\in\dot{\mathrm{BV}}(\mathbb R^N)$ when $p=1$.
Moreover, if $(p,\gamma)\ne(1,-1)$, then
\begin{equation}\label{app:eq:liminf-characterization}
\mathcal E_p(u;\mathbb R^N)
\sim\varliminf_{\lambda\to L_\gamma}\Phi_{\lambda,p}^\gamma(u;\mathbb R^N)
\sim\sup_{\lambda\in(0,\infty)}\Phi_{\lambda,p}^\gamma(u;\mathbb R^N),
\end{equation}
where the positive equivalence constants depend only on $N$, $p$, and $\gamma$.
\end{corollary}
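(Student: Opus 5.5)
The lower bound \eqref{app:eq:liminf-lower-bound} comes from Corollary~\ref{app:thm:gamma-coercivity} with $\Omega=\mathbb R^N$, applied to the constant sequence $u_j:=u$ and any $\lambda_j\to L_\gamma$ realizing the liminf. Concretely, I would choose $\{\lambda_j\}$ with $\lambda_j\to L_\gamma$ and
$$
\lim_{j\to\infty}\Phi_{\lambda_j,p}^\gamma(u;\mathbb R^N)=\varliminf_{\lambda\to L_\gamma}\Phi_{\lambda,p}^\gamma(u;\mathbb R^N).
$$
Since $u_j\equiv u$ converges trivially in $L^1_{\mathrm{loc}}(\mathbb R^N)$, the corollary gives $c_{N,p,\gamma}\mathcal E_p(u;\mathbb R^N)$ below this limit. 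The "in particular" clause then follows from the definition of $\mathcal E_p$, which is $+\infty$ unless $u\in\dot W^{1,p}$ for $p>1$ or $u\in\dot{\mathrm{BV}}$ for $p=1$. No new work is needed here.

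For \eqref{app:eq:liminf-characterization} I would prove the chain
$$
\mathcal E_p\lesssim\varliminf_{\lambda\to L_\gamma}\Phi_{\lambda,p}^\gamma\leq\sup_{\lambda\in(0,\infty)}\Phi_{\lambda,p}^\gamma\lesssim\mathcal E_p .
$$
The first inequality is the one just obtained, and the middle one is trivial. The last is the upper bound in \cite[Theorems~1.3(i) and 1.4(i)]{BSVY}. That bound states $\sup_{\lambda}\Phi_{\lambda,p}^\gamma(u;\mathbb R^N)\lesssim\int|\nabla u|^p$ for $u\in\dot W^{1,p}$, and $\lesssim|Du|(\mathbb R^N)$ when $p=1$. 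I would check that its hypotheses fit our setting. First, it covers $u\in L^1_{\mathrm{loc}}$ with finite energy, via a density/approximation step inside \cite{BSVY}. Second, it holds for every $\gamma\neq0$ except the endpoint $(p,\gamma)=(1,-1)$. At that endpoint, BV functions with jumps force $\sup_\lambda\Phi=\infty$: for a jump function, the set $\{|u(x)-u(y)|>\lambda\}$ near the jump contributes $\lambda\iint|x-y|^{-1-N}$ across the interface, which diverges logarithmically. If $\mathcal E_p(u)=\infty$, the chain holds trivially because the lower bound forces the liminf to be infinite.

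The main obstacle is only bookkeeping on the cited upper bound. I need its exact parameter range and function class to match $\gamma\in\mathbb R\setminus(-1,0]$ with $(p,\gamma)\neq(1,-1)$. I also need it to apply to merely locally integrable $u$ with $\mathcal E_p(u;\mathbb R^N)<\infty$, not just to smooth compactly supported functions. If the cited result is stated only for $C_{\rm c}^\infty$ or $W^{1,p}$, I would first use the reduction that $\Phi$ is unchanged by adding constants. I would then approximate by mollification and truncation $\rho_\varepsilon*u$. The energy bound passes to the limit by Fatou's lemma, since $E_{\lambda,\gamma/p}[u]$ is contained in the liminf of the corresponding sets for the a.e. convergent approximants, and the gradient energies converge.
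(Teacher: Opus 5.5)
Your proposal is correct and is essentially the paper's argument. The paper likewise applies Corollary~\ref{app:thm:gamma-coercivity} with $\Omega=\mathbb R^N$ to the constant sequence $u_j\equiv u$ for the lower bound, and closes the chain $\mathcal E_p\lesssim\varliminf_{\lambda\to L_\gamma}\Phi_{\lambda,p}^\gamma\le\sup_{\lambda}\Phi_{\lambda,p}^\gamma\lesssim\mathcal E_p$ with the upper bounds of \cite[Theorems~1.3(i) and 1.4(i)]{BSVY}; those bounds are quoted directly for $\dot W^{1,p}$ and $\dot{\mathrm{BV}}$ functions, so your fallback approximation step is not needed.
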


\begin{remark}
\begin{enumerate}[\rm(i)]
\item When $p=1$ and $\gamma=-1$, estimate \eqref{app:eq:liminf-lower-bound} still holds,
whereas the equivalences in \eqref{app:eq:liminf-characterization} fail.
Indeed, \cite[Theorem~1.8(ii)]{BSVY} provides a compactly supported
$u\in W^{1,1}(\mathbb R^N)$ for which
$\lim_{\lambda\to0^+}\Phi_{\lambda,1}^{-1}(u;\mathbb R^N)=\infty$.
Such pathological pointwise behaviour was already observed by Brezis and Nguyen
in \cite[Section~2.2, Pathology~1]{BrezisNguyenImages}.

\item For $\gamma\in(-\infty,-p)$, estimate
\eqref{app:eq:liminf-lower-bound} replaces the limsup in
\cite[Theorem~15]{NguyenSurvey} by the liminf, with an implicit positive
constant in place of the coefficient $\frac{K_{N,p}}{|\gamma|}$.
Recently, this liminf bound was also obtained in
\cite[Corollary~1.2]{Picenni2026}.
For $\gamma\in[-p,-1]$, estimate \eqref{app:eq:liminf-lower-bound} recovers the liminf estimate in
\cite[Theorem~16]{NguyenSurvey}.

\end{enumerate}
\end{remark}

\section{Further Applications}
\label{sec:applications}

The target of this section is to apply the Poincar\'e inequality to
establish some new
compactness results and the existence of
minimizers at a fixed threshold in Subsection \ref{s5.1},
and a criterion for vanishing mean oscillation in Subsection \ref{s5.2}.

\subsection{Compactness and Variational Problems}\label{s5.1}

In this subsection, motivated by \cite[Theorem~2]{Nguyen2011}
and \cite[Theorem~2 and Corollary~7]{BrezisNguyenImages},
we establish compactness results for thresholds tending to zero
and for a fixed threshold.
We then apply the latter result to prove the existence of minimizers
for a variational problem involving $\Phi_{\lambda,p}^\gamma$.

\begin{theorem}\label{app:thm:asymptotic-compactness}
Let $p\in[1,\infty)$,
$\gamma\in(-\infty,-1]$, and $\{\lambda_j\}_{j\in\mathbb N}\subset(0,\infty)$
be a sequence satisfying $\lambda_j\to0^+$ as $j\to\infty$.
Then the following statements hold.
\begin{enumerate}[\rm(i)]
\item Let $Q_0:=(0,1)^N$ and $\{u_j\}_{j\in\mathbb N}\subset L^1(Q_0)$. If
\begin{equation}\label{app:eq:compactness-assumption}
\sup_j\|u_j\|_{L^1(Q_0)}<\infty
\quad\text{and}\quad\varliminf_{j\to\infty}\Phi_{\lambda_j,p}^\gamma(u_j;Q_0)<\infty,
\end{equation}
then there exist a subsequence $\{u_{j_k}\}_{k\in\mathbb N}$ and $u\in L^p(Q_0)$
such that $u_{j_k}\to u$ in $L^p(Q_0)$ and
$$
\mathcal E_p(u;Q_0)
\lesssim\varliminf_{k\to\infty}
\Phi_{\lambda_{j_k},p}^\gamma(u_{j_k};Q_0)<\infty,
$$
where the implicit positive constant depends only on $N$, $p$, and $\gamma$.
In particular,
$u\in W^{1,p}(Q_0)$ when $p\in(1,\infty)$, or
$u\in\mathrm{BV}(Q_0)$ when $p=1$.
\item Let $\Omega\subset\mathbb R^N$ be an open set and $\{u_j\}_{j\in\mathbb N}\subset L^p(\Omega)$. If
\begin{equation}\label{app:eq:local-compactness-assumption}
\sup_j\|u_j\|_{L^p(\Omega)}<\infty
\quad\text{and}\quad
\varliminf_{j\to\infty}\Phi_{\lambda_j,p}^\gamma(u_j;\Omega)<\infty,
\end{equation}
then there exist a subsequence $\{u_{j_k}\}_{k\in\mathbb N}$ and $u\in L^p(\Omega)$
such that $u_{j_k}\to u$ in $L^p_{\mathrm{loc}}(\Omega)$ and
$$
\mathcal E_p(u;\Omega)
\lesssim\varliminf_{k\to\infty}\Phi_{\lambda_{j_k},p}^\gamma(u_{j_k};\Omega)
<\infty,
$$
where the implicit positive constant depends only on $N$, $p$, and $\gamma$.
In particular,
$u\in W^{1,p}(\Omega)$ when $p\in(1,\infty)$, or $u\in\mathrm{BV}(\Omega)$
when $p=1$.
\end{enumerate}
\end{theorem}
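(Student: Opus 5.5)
We propose to derive compactness from a Fréchet--Kolmogorov type criterion, using the truncated difference estimate \eqref{eq:truncation-negative} (in the form \eqref{eq:truncated-increment-bound}) to control the $L^p$ modulus of continuity up to an error that vanishes with $\lambda_j$. Then we identify the limit via Corollary~\ref{app:thm:gamma-coercivity}.

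For (i), first pass to a subsequence with $\Phi_{\lambda_j,p}^\gamma(u_j;Q_0)\le C_0<\infty$. Corollary~\ref{thm:unified-negative} gives
\[
\int_{Q_0}\int_{Q_0}|u_j(x)-u_j(y)|^p\,dx\,dy\lesssim \lambda_j^p+C_0 .
\]
Since $\int_{Q_0}|u_j-c|^p\,dx\le \int_{Q_0}\int_{Q_0}|u_j(x)-u_j(y)|^p\,dx\,dy$ for $c=(u_j)_{Q_0}$, and $|(u_j)_{Q_0}|\le\|u_j\|_{L^1}$ is bounded, the sequence is bounded in $L^p(Q_0)$.

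For equicontinuity of translations, apply the rescaled version of Theorem~\ref{thm:truncated-differences}(i) on subcubes $Q$ of side $r$. Tiling $Q_0$ (and shifted tilings) by cubes of side $r$ and summing gives
\[
\int\!\!\int_{|x-y|<r}|u_j(x)-u_j(y)|^p\,dx\,dy\lesssim r^{N}\Big(\lambda_j^p r^{p+\gamma}+r^{p-N}\Phi_{\lambda_j,p}^\gamma(u_j;Q_0)\Big).
\]
Here we use $(a-b)_+^p\ge 2^{1-p}a^p-b^p$, and the fact that the local energies on disjoint cubes sum to at most the global energy. Dividing by $r^N$ yields
\[
\fint_{B(0,r)}\|\Delta_h u_j\|_{L^p(Q_{0,h})}^p\,dh\lesssim r^p C_0+\lambda_j^p r^{p+\gamma}.
\]

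The main obstacle is the term $\lambda_j^p r^{p+\gamma}$, which blows up as $r\to0$ when $p+\gamma<0$. It is not uniform in $j$, but for fixed $r$ it tends to $0$ as $j\to\infty$. We handle this with the standard mollification argument: compare $u_j$ with $\rho_r*u_j$ (extended suitably near the boundary, or working on interior cubes and then using a reflection or an interior exhaustion plus a uniform $L^p$ bound near the boundary). The averaged translation estimate gives
\[
\|u_j-\rho_r*u_j\|_{L^p}^p\lesssim r^pC_0+\lambda_j^p r^{p+\gamma},
\]
and for fixed $r$ the family $\{\rho_r*u_j\}_j$ is precompact by Arzelà--Ascoli. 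A diagonal argument, choosing $r$ small and then $j$ large, shows that the sequence is totally bounded in $L^p$. Boundary layers are controlled because the estimate applies on all of $Q_0$ with cubes $Q\subset Q_0$, so no extension is really needed: the averages $\mathbb{E}$ over dyadic subcubes of side $r$ serve as the approximants, and $\|u_j-\mathbb{E}_r u_j\|_{L^p}^p$ is bounded by the same right-hand side.

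Given $u_{j_k}\to u$ in $L^p$, Corollary~\ref{app:thm:gamma-coercivity} with $\Omega=Q_0$ yields the stated bound on $\mathcal E_p(u;Q_0)$. Combined with $u\in L^p$, this gives $u\in W^{1,p}(Q_0)$ or $u\in\mathrm{BV}(Q_0)$.

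For (ii), cover $\Omega$ by countably many cubes with closures in $\Omega$ and apply (i) on each, after rescaling. The $L^1$ bound on each cube follows from the $L^p$ bound. A diagonal subsequence then converges in $L^p_{\rm loc}$, and the limit lies in $L^p(\Omega)$ by Fatou. The energy bound again follows from Corollary~\ref{app:thm:gamma-coercivity}, since $L^p_{\rm loc}$ convergence implies $L^1_{\rm loc}$ convergence.
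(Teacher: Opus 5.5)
Your final route is the paper's proof. You use dyadic averages over subcubes of side $r=2^{-k}$ as finite-dimensional approximants, with $\|u_j-\mathbb E_ru_j\|_{L^p(Q_0)}^p\lesssim\lambda_j^pr^{p+\gamma}+r^p\Phi_{\lambda_j,p}^\gamma(u_j;Q_0)$, which is \eqref{app:eq:finite-dimensional-approximation}. You then let $\lambda_j\to0$ for fixed $r$ and identify the limit via Corollary~\ref{app:thm:gamma-coercivity}. Part (ii) goes by a countable cube cover, diagonal extraction and Fatou, exactly as in the paper.

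The Fr\'echet--Kolmogorov, mollifier and shifted-tiling detour is unnecessary. Its boundary treatment via a uniform $L^p$ bound alone would not give smallness near $\partial Q_0$ anyway.

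One correction is needed. The estimate on each subcube must be taken from \eqref{1-1}, i.e.\ the rescaled Corollary~\ref{thm:unified-negative}. It cannot be derived from Theorem~\ref{thm:truncated-differences}(i) via $(a-b)_+^p\ge2^{1-p}a^p-b^p$. That route leaves the term $\lambda^p\iint_{Q\times Q}|x-y|^{p+\gamma}\,dx\,dy$, which is infinite when $\gamma\le-N-p$, a range covered by the statement. The far-increment trick in the proof of Corollary~\ref{thm:unified-negative} is what avoids this divergence.

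Also, your tiled bound should read $r^N\bigl(\lambda_j^pr^{p+\gamma}+r^p\Phi_{\lambda_j,p}^\gamma(u_j;Q_0)\bigr)$. As you wrote it, dividing by $r^N$ does not produce the term $r^pC_0$.
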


\begin{remark}
When $\gamma=-p$ and $\Omega=\mathbb R^N$, part (ii) of
Theorem~\ref{app:thm:asymptotic-compactness} in this case reduces
to Nguyen's nonlocal
Rellich--Kondrachov theorem \cite[Theorem~2]{Nguyen2011}.
\end{remark}

\begin{proof}[Proof of Theorem~\ref{app:thm:asymptotic-compactness}]
We first show (i). Passing to a subsequence, we may assume that
$\Phi_{\lambda_j,p}^\gamma(u_j;Q_0)$ converges to the finite liminf
in \eqref{app:eq:compactness-assumption}.
Applying \eqref{1-1} on $Q_0$
together with \eqref{app:eq:compactness-assumption}, we obtain a
uniform bound for $\|u_j\|_{L^p(Q_0)}$. For $r=2^{-k}$ with $k\in\mathbb N$,
let
$$
P_ru_j:=\sum_R(u_j)_R\mathbf1_R,
$$
where the sum is taken over $R:=2^{-k}[z+(0,1)^N]$ with $z\in\{0,\ldots,2^k-1\}^N$.
These cubes form a partition of $Q_0$ up to a set of measure zero.
Using \eqref{1-1} on these cubes and summing, we find that
\begin{align}\label{app:eq:finite-dimensional-approximation}
\|u_j-P_ru_j\|_{L^p(Q_0)}^p
&=\sum_R\int_R|u_j-(u_j)_R|^p\,dx\notag\\
&\lesssim \lambda_j^p r^{p+\gamma}
+r^p\Phi_{\lambda_j,p}^\gamma(u_j;Q_0),
\end{align}
where the implicit positive constant is independent of $j$ and $r$.
For fixed $k\in\mathbb N$, the finitely many averages $(u_j)_R$ are
bounded uniformly in $j$, so they converge simultaneously along a subsequence.
Repeating this selection for $k\in\{1,2,\ldots\}$ and taking the $j$th term
of the $j$th subsequence, we obtain a subsequence along which
$P_{2^{-k}}u_j$ converges in $L^p(Q_0)$ for any $k\in\mathbb N$.
We retain the notation $u_j$ for this subsequence.
From the triangle inequality and
\eqref{app:eq:finite-dimensional-approximation}, it follows that
\begin{align*}
\varlimsup_{i,j\to\infty}\|u_i-u_j\|_{L^p(Q_0)}^p
\lesssim\varlimsup_{j\to\infty}\|u_j-P_ru_j\|_{L^p(Q_0)}^p
\lesssim r^p\lim_{j\to\infty}\Phi_{\lambda_j,p}^\gamma(u_j;Q_0).
\end{align*}
Letting $r\to0^+$, we conclude that the selected subsequence is Cauchy
in $L^p(Q_0)$ and hence converges to a limit $u\in L^p(Q_0)$. Applying
Corollary~\ref{app:thm:gamma-coercivity} to this subsequence with
$\Omega=Q_0$, we obtain the asserted regularity of $u$.

We next prove (ii). Passing to a subsequence, we may assume that
$\Phi_{\lambda_j,p}^\gamma(u_j;\Omega)$ converges to the finite liminf
in \eqref{app:eq:local-compactness-assumption}.
Choose open cubes $\{R_m\}_{m\in\mathbb N}$ with
$\Omega=\bigcup_{m\in\mathbb N}R_m$, where
$\overline{R_m}\subset\Omega$ for any $m\in\mathbb N$.
Applying (i) after translation and dilation, and successively
extracting subsequences, we obtain a diagonal subsequence $\{u_{j_k}\}_{k\in\mathbb N}$
and functions $v_m\in L^p(R_m)$ such that
$u_{j_k}\to v_m$ in $L^p(R_m)$ for any $m\in\mathbb N$.
The limits agree almost everywhere on overlaps of these cubes.
Since the cover is countable, we deduce that, outside a common null set,
we can define $u(x):=v_m(x)$ whenever $x\in R_m$, independently of the
choice of $m$; on that null set, we define $u:=0$.
Since any compact subset of $\Omega$ is covered by finitely many $R_m$,
we infer that $u\in L^p_{\mathrm{loc}}(\Omega)$ and
$u_{j_k}\to u$ in $L^p_{\mathrm{loc}}(\Omega)$.
Passing to a further subsequence, we also obtain $u_{j_k}\to u$
almost everywhere in $\Omega$.
From Fatou's lemma and \eqref{app:eq:local-compactness-assumption},
it follows that
$$
\|u\|_{L^p(\Omega)}^p
\leq\varliminf_{k\to\infty}\|u_{j_k}\|_{L^p(\Omega)}^p<\infty.
$$
Applying Corollary~\ref{app:thm:gamma-coercivity} on any bounded
open set $U$ with $\overline U\subset\Omega$, we conclude that
$$
\mathcal E_p(u;U)
\lesssim\varliminf_{k\to\infty}\Phi_{\lambda_{j_k},p}^\gamma(u_{j_k};U)
\leq\varliminf_{k\to\infty}\Phi_{\lambda_{j_k},p}^\gamma(u_{j_k};\Omega),
$$
where the implicit positive constant is independent of $U$.
Exhausting $\Omega$ by such sets $U$, we obtain the asserted
lower bound and regularity of $u$. This completes the proof of
Theorem~\ref{app:thm:asymptotic-compactness}.
\end{proof}

Theorem~\ref{app:thm:asymptotic-compactness} concerns thresholds tending to zero.
For variational problems at a fixed threshold, compactness at that threshold
is a key ingredient in showing the existence of minimizers; see, for instance,
Brezis and Nguyen \cite[Theorem~2]{BrezisNguyenImages}.
The class of functionals covered by their fixed-parameter compactness theorem,
however, does not include the family $\{\Phi_{\lambda,p}^\gamma\}_{\lambda\in(0,\infty)}$
considered here. The following theorem identifies the precise range of $\gamma$
for which compactness holds at a fixed threshold.

\begin{theorem}\label{app:thm:fixed-compactness}
Let $p\in[1,\infty)$, $\gamma\in\mathbb R$,
$Q_0=(0,1)^N$, and $\lambda\in(0,\infty)$.
Then the following assertions are equivalent.
\begin{enumerate}[\rm(i)]
\item $\gamma\in(-p,-1]$.
\item Any sequence $\{u_j\}_{j\in\mathbb N}\subset L^1(Q_0)$ satisfying
$$
\sup_j\|u_j\|_{L^1(Q_0)}<\infty
\quad\text{and}\quad
\varliminf_{j\to\infty}\Phi_{\lambda,p}^\gamma(u_j;Q_0)<\infty
$$
has a subsequence converging in $L^p(Q_0)$.
\end{enumerate}
\end{theorem}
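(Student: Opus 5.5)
The equivalence splits into two directions. I would prove (i)$\Rightarrow$(ii) with the Poincar\'e inequality \eqref{1-1}, and (ii)$\Rightarrow$(i) with explicit counterexamples on the two complementary ranges $\gamma\in(-\infty,-p]$ and $\gamma\in(-1,\infty)$.

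\emph{(i)$\Rightarrow$(ii).} Fix $\gamma\in(-p,-1]$ and $\lambda$, and pass to a subsequence along which $\Phi_{\lambda,p}^\gamma(u_j;Q_0)$ stays bounded by some $A<\infty$.
\begin{enumerate}[\rm(1)]
\item Apply \eqref{1-1} on $Q_0$ (Corollary~\ref{thm:classification}(ii), valid since $\gamma\le-1$). Together with $|(u_j)_{Q_0}|\le\|u_j\|_{L^1}$, this gives $\sup_j\|u_j\|_{L^p(Q_0)}<\infty$.
\item Copy the proof of Theorem~\ref{app:thm:asymptotic-compactness}(i) verbatim, with $\lambda_j\equiv\lambda$. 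Applying \eqref{1-1} on the dyadic cubes of side $r=2^{-k}$ and summing gives
$$
\|u_j-P_ru_j\|_{L^p(Q_0)}^p\lesssim\lambda^p r^{p+\gamma}+r^pA .
$$
\item The new point is that $p+\gamma>0$. Hence the first term tends to $0$ as $r\to0^+$ even though $\lambda$ is fixed; this is exactly where $\gamma>-p$ is used.
\item A diagonal extraction makes $P_{2^{-k}}u_j$ converge for every $k$. The triangle inequality then shows the subsequence is Cauchy in $L^p(Q_0)$.
\end{enumerate}

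\emph{Failure for $\gamma\le-p$.} Here $\beta=1+\gamma/p\le0$ and $|x-y|\le\sqrt N$ on $Q_0$. So the threshold satisfies $\lambda|x-y|^\beta\ge\lambda N^{\beta/2}=:2c>0$.
\begin{enumerate}[\rm(1)]
\item Take $u_j(x):=c\sin(2\pi jx_1)$. Then $|u_j(x)-u_j(y)|\le2c$, so $E_{\lambda,\gamma/p}[u_j;Q_0]=\emptyset$ and $\Phi_{\lambda,p}^\gamma(u_j;Q_0)=0$. Also $\|u_j\|_{L^1}\le c$.
\item Since $u_j\rightharpoonup0$ weakly while $\|u_j\|_{L^p}$ is a fixed positive constant, no subsequence converges in $L^p$.
\end{enumerate}
This covers $\gamma=-p$ (constant threshold) as well as $\gamma<-p$.

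\emph{Failure for $\gamma>-1$.} Use concentration. Let $C_s:=(\tfrac12-\tfrac s2,\tfrac12+\tfrac s2)^N$ and $u_j:=s^{-N}\mathbf1_{C_s}$ with $s=1/j$, so $\|u_j\|_{L^1}=1$.
\begin{enumerate}[\rm(1)]
\item Only pairs with exactly one point in $C_s$ can lie in $E_{\lambda,\gamma/p}[u_j;Q_0]$. Therefore
$$
\Phi_{\lambda,p}^\gamma(u_j;Q_0)\le2\lambda^p\int_{C_s}\int_{Q_0\setminus C_s}|x-y|^{\gamma-N}\,dy\,dx .
$$
\item For $\gamma\in(-1,0]$, the boundary-layer computation in the proof of Proposition~\ref{prop:minus-one-sharp-global} is scale invariant. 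It shows the integral is finite and equals $s^{N+\gamma}$ times the unit-scale value, hence tends to $0$ because $N+\gamma>0$.
\item For $\gamma>0$, the kernel is locally integrable, so the integral is $\lesssim|C_s|=s^N\to0$.
\item Thus the energies are bounded (in fact vanish). But $u_j\to0$ almost everywhere while $\|u_j\|_{L^1}=1$, so no subsequence converges in $L^1$, let alone in $L^p$.
\end{enumerate}

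The only delicate step is the scaling bound in the range $\gamma\in(-1,0]$, where one must check that the cross-boundary integral behaves like $s^{N+\gamma}$. I expect this to follow directly from dilating the computation of Proposition~\ref{prop:minus-one-sharp-global}, using the estimate $|G_r|\lesssim s^{N-1}r^{2}$ for $r\le s$.
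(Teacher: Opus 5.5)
Your proposal is correct and follows the paper's proof essentially step by step. Sufficiency is the same: \eqref{1-1} on $Q_0$ and on dyadic subcubes, with $p+\gamma>0$ making $\lambda^p r^{p+\gamma}\to0$ at fixed $\lambda$. Necessity also matches: bounded oscillations in $x_1$ for $\gamma\le-p$, and concentrating indicators for $\gamma>-1$, where dropping the threshold makes the amplitude irrelevant. Your sine wave with weak convergence replaces the paper's square wave with $\|u_j-u_k\|_{L^p}^p=a^p/2$, and your $L^1$ normalization replaces its $L^p$ normalization; both variations are harmless.

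The step you flagged needs a correction, however. For $r\le s$ the dilated bound is $|G_r|\lesssim s^{N-1}r^{N+1}$: $x$ must lie in an $r$-layer of $\partial C_s$, which has measure $\lesssim s^{N-1}r$, and $y$ must lie in $B(x,r)$. Your $s^{N-1}r^{2}$ is too weak, since $\sum_{r\le s}r^{\gamma-N}s^{N-1}r^2$ diverges for $N\ge2$ and $\gamma\le0$.

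The computation is also not literally scale invariant, because $Q_0$ is not dilated. The scales $r\in[s,\sqrt N]$ contribute $s^N\sum r^{\gamma}$. At $\gamma=0$ this gives $s^N\log(1/s)$ rather than $s^{N+\gamma}$, and the ``unit-scale value'' over the exterior of a unit cube is infinite there. The energies still tend to $0$, so the conclusion stands. The paper avoids this bookkeeping with the single bound $|B(x_0,\varepsilon)\triangle B(x_0-h,\varepsilon)|\lesssim\min\{\varepsilon^N,\varepsilon^{N-1}|h|\}$ and polar coordinates, which handles all $\gamma>-1$ at once.
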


\begin{proof}
Assume first that $\gamma\in(-p,-1]$. Passing to a subsequence,
we may assume that $\sup_j\Phi_{\lambda,p}^\gamma(u_j;Q_0)<\infty$.
Since $p+\gamma\in(0,\infty)$, we deduce that the right-hand side of
\eqref{app:eq:finite-dimensional-approximation} with $\lambda_j=\lambda$
tends to zero as $r\to0^+$ uniformly in $j$.
Repeating the compactness argument in the proof of
Theorem~\ref{app:thm:asymptotic-compactness}(i), we obtain a subsequence
converging in $L^p(Q_0)$.

To prove necessity, first assume that $\gamma\leq-p$ and write
$\beta=1+\frac{\gamma}{p}\leq0$. Choose
$a\in(0,\lambda(\sqrt N)^\beta]$, and define $\{u_j\}_{j\in\mathbb N}$ by
$$
u_j(x):=a\mathbf1_{\{\lfloor2^jx_1\rfloor\in2\mathbb Z\}}(x),
\quad \forall\,x:=(x_1,\ldots,x_N)\in Q_0,
$$
where $\lfloor t\rfloor$ denotes the largest integer not greater than $t$.
Then $|u_j(x)-u_j(y)|\leq\lambda|x-y|^\beta$ for $x\ne y$, so that
$\Phi_{\lambda,p}^\gamma(u_j;Q_0)=0$. On the other hand,
$\|u_j-u_k\|_{L^p(Q_0)}^p=\frac{a^p}{2}$ for $j\ne k$,
and no subsequence converges in $L^p(Q_0)$.

Next, assume that $\gamma\in(-1,\infty)$. Let $x_0$ be the center of $Q_0$,
choose a sequence $\{\varepsilon_j\}_{j\in\mathbb N}\subset(0,\frac{1}{4})$
with $\varepsilon_j\to0^+$, and define $\{u_j\}_{j\in\mathbb N}$ by
$$
u_j:=|B(x_0,\varepsilon_j)|^{-\frac{1}{p}}
\mathbf1_{B(x_0,\varepsilon_j)}.
$$
Then $\|u_j\|_{L^p(Q_0)}=1$, $\|u_j\|_{L^1(Q_0)}\leq1$, and
$u_j\to0$ almost everywhere. To estimate $\Phi_{\lambda,p}^\gamma(u_j;Q_0)$,
observe that, for any $h\in\mathbb R^N$,
$$
\left|B(x_0,\varepsilon_j)\mathbin{\triangle}
B(x_0-h,\varepsilon_j)\right|
\lesssim\min\{\varepsilon_j^N,\varepsilon_j^{N-1}|h|\},
$$
where $A\mathbin{\triangle}B:=(A\setminus B)\cup(B\setminus A)$
denotes the symmetric difference.
Dropping the threshold condition while retaining $u_j(x)\ne u_j(y)$,
and making the change of variables $h:=y-x$, we obtain
\begin{align*}
\Phi_{\lambda,p}^\gamma(u_j;Q_0)
&\leq\lambda^p\int_{Q_0}\int_{Q_0} |x-y|^{\gamma-N}
\mathbf1_{\{u_j(x)\ne u_j(y)\}}\,dx\,dy\\
&\leq\lambda^p\int_{|h|\leq\sqrt N}|h|^{\gamma-N}
\left|B(x_0,\varepsilon_j)\mathbin{\triangle}
B(x_0-h,\varepsilon_j)\right|\,dh.
\end{align*}
Using the symmetric difference estimate above and polar coordinates,
we find that
\begin{align*}
\Phi_{\lambda,p}^\gamma(u_j;Q_0)
&\lesssim\lambda^p\int_0^{\sqrt N}r^{\gamma-1}
\min\{\varepsilon_j^N,\varepsilon_j^{N-1}r\}\,dr\\
&=\lambda^p\left[
\varepsilon_j^{N-1}\int_0^{\varepsilon_j}r^\gamma\,dr
+\varepsilon_j^N\int_{\varepsilon_j}^{\sqrt N}
r^{\gamma-1}\,dr\right]\\
&\lesssim\lambda^p
\begin{cases}
\varepsilon_j^{N+\gamma}&\text{if }\gamma\in(-1,0),\\
\varepsilon_j^N(1+|\log\varepsilon_j|)&\text{if }\gamma=0,\\
\varepsilon_j^N&\text{if }\gamma\in(0,\infty),
\end{cases}
\end{align*}
where the implicit positive constants are independent of $j$.
Thus, $\Phi_{\lambda,p}^\gamma(u_j;Q_0)\to0$ as $j\to\infty$, while
no subsequence converges in $L^p(Q_0)$.
In particular, the compactness range
in Theorem~\ref{app:thm:fixed-compactness} is empty when $p=1$.
This completes the proof of Theorem \ref{app:thm:fixed-compactness}.
\end{proof}

We next consider a variational problem associated with $\Phi_{\lambda,p}^\gamma$.
Let $p\in(1,\infty)$, $\gamma\in(-p,-1]$, and
$Q_0=(0,1)^N$. For $\lambda\in(0,\infty)$ and $f\in L^p(Q_0)$, define
$$
J_{\lambda,f}(u):=\Phi_{\lambda,p}^\gamma(u;Q_0)+\|u-f\|_{L^p(Q_0)}^p,
\quad \forall\,u\in L^p(Q_0),
$$
and consider the minimization problem
$m_\lambda(f):=\inf_{u\in L^p(Q_0)}J_{\lambda,f}(u)$.

\begin{corollary}\label{app:cor:regularization}
Under the above assumptions, for any $\lambda\in(0,\infty)$,
there exists $u\in L^p(Q_0)$ such that
$J_{\lambda,f}(u)=m_\lambda(f)$.
Moreover, any sequence $\{u_j\}_{j\in\mathbb N}\subset L^p(Q_0)$
satisfying $J_{\lambda,f}(u_j)\to m_\lambda(f)$ as $j\to\infty$ is relatively
compact in $L^p(Q_0)$, and any limit point is a minimizer.
\end{corollary}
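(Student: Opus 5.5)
The plan is the direct method of the calculus of variations, using Theorem~\ref{app:thm:fixed-compactness} for compactness and Fatou's lemma for lower semicontinuity of the nonconvex part.

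\emph{Step 1: the problem is finite.} Take $u=0$. Then $\Phi_{\lambda,p}^\gamma(0;Q_0)=0$, so $m_\lambda(f)\le\|f\|_{L^p(Q_0)}^p<\infty$. Hence minimizing sequences exist.

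\emph{Step 2: compactness.} Let $\{u_j\}$ satisfy $J_{\lambda,f}(u_j)\to m_\lambda(f)$. Both terms of $J_{\lambda,f}$ are nonnegative, so each is eventually bounded by $m_\lambda(f)+1$. In particular, $\|u_j\|_{L^p(Q_0)}\le\|f\|_{L^p(Q_0)}+(m_\lambda(f)+1)^{1/p}$. Since $|Q_0|=1$, Hölder gives $\sup_j\|u_j\|_{L^1(Q_0)}<\infty$. Also $\sup_j\Phi_{\lambda,p}^\gamma(u_j;Q_0)<\infty$, so the liminf is finite. As $\gamma\in(-p,-1]$, Theorem~\ref{app:thm:fixed-compactness} (direction (i)$\Rightarrow$(ii)) yields a subsequence converging in $L^p(Q_0)$. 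Applying this to an arbitrary subsequence of $\{u_j\}$ (itself a minimizing sequence) shows every subsequence has a further convergent subsequence. This is exactly relative compactness in $L^p(Q_0)$.

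\emph{Step 3: lower semicontinuity.} Let $u_{j_k}\to u$ in $L^p(Q_0)$, and refine so that $u_{j_k}\to u$ almost everywhere. The fidelity term is continuous, so $\|u_{j_k}-f\|_{L^p}^p\to\|u-f\|_{L^p}^p$. For the nonlocal term, the key point is pointwise lower semicontinuity of the indicator of the open condition. For a.e.\ $(x,y)$ with $x\ne y$, if
$$
\frac{|u(x)-u(y)|}{|x-y|^{1+\gamma/p}}>\lambda,
$$
then the same strict inequality holds for $u_{j_k}$ for all large $k$. Hence
$$
\mathbf 1_{E_{\lambda,\gamma/p}[u;Q_0]}\le\varliminf_{k\to\infty}\mathbf 1_{E_{\lambda,\gamma/p}[u_{j_k};Q_0]}\quad\text{a.e.}
$$
Fatou's lemma with weight $\lambda^p|x-y|^{\gamma-N}$ then gives $\Phi_{\lambda,p}^\gamma(u;Q_0)\le\varliminf_k\Phi_{\lambda,p}^\gamma(u_{j_k};Q_0)$.

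\emph{Step 4: conclusion.} Combining Steps 2 and 3, $J_{\lambda,f}(u)\le\varliminf_k J_{\lambda,f}(u_{j_k})=m_\lambda(f)\le J_{\lambda,f}(u)$. So every limit point is a minimizer, and existence follows from Step 2.

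The only substantive ingredient is the compactness, already provided by Theorem~\ref{app:thm:fixed-compactness}. The one point needing care is that the superlevel set is defined by a \emph{strict} inequality. This is exactly what makes its indicator lower semicontinuous under a.e.\ convergence; with a nonstrict inequality, Fatou's lemma would fail.
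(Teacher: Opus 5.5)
Your proposal is correct and follows essentially the same route as the paper: the $L^p$ (hence $L^1$) bound on a minimizing sequence, Theorem~\ref{app:thm:fixed-compactness} applied to arbitrary subsequences to get relative compactness, Fatou-based lower semicontinuity of $\Phi_{\lambda,p}^\gamma(\cdot;Q_0)$, and the direct method to conclude. Your explicit use of the strict inequality in the superlevel set to justify the a.e.\ lower semicontinuity of the indicators simply spells out the step the paper calls ``easily'' obtained from Fatou's lemma.
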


\begin{proof}
Using Fatou's lemma, we easily
conclude that $\Phi_{\lambda,p}^\gamma(\cdot;Q_0)$ is lower
semicontinuous in $L^p(Q_0)$; that is,
$$
\Phi_{\lambda,p}^\gamma(u;Q_0)
\leq\varliminf_{j\to\infty}\Phi_{\lambda,p}^\gamma(u_j;Q_0)
\quad\text{whenever }u_j\to u\text{ in }L^p(Q_0).
$$
Let $\{u_j\}_{j\in\mathbb N}$ be a minimizing sequence. Since
$J_{\lambda,f}(u_j)\to m_\lambda(f)\leq
J_{\lambda,f}(0)=\|f\|_{L^p(Q_0)}^p<\infty$,
we infer that, after discarding finitely many terms,
$
\sup_j J_{\lambda,f}(u_j)<\infty.
$
By this and the triangle inequality, we obtain
$$\sup_j\|u_j\|_{L^1(Q_0)}
\leq\sup_j\|u_j\|_{L^p(Q_0)}
\leq\sup_j\|u_j-f\|_{L^p(Q_0)}+\|f\|_{L^p(Q_0)}<\infty.
$$
From Theorem~\ref{app:thm:fixed-compactness} and the fact that any
subsequence of $\{u_j\}_{j\in\mathbb N}$ satisfies the same bounds,
it follows that any such subsequence has a further subsequence converging in $L^p(Q_0)$. Thus, $\{u_j\}_{j\in\mathbb N}$
is relatively compact in $L^p(Q_0)$.
For any limit point $u$, choose a subsequence satisfying
$u_{j_k}\to u$ in $L^p(Q_0)$.
From the lower semicontinuity of $\Phi^\gamma_{\lambda,p}(\cdot;Q_0)$
and the continuity of
$u\mapsto\|u-f\|_{L^p(Q_0)}^p$, we deduce that
$$
m_\lambda(f)\leq J_{\lambda,f}(u)
\leq\lim_{k\to\infty}J_{\lambda,f}(u_{j_k})=m_\lambda(f),
$$
and hence $u$ is a minimizer.
This completes the proof of Corollary \ref{app:cor:regularization}.
\end{proof}

\subsection{A VMO Criterion at a Single Threshold}\label{s5.2}

In this subsection, we take $p=N$ and determine the range of $\gamma$
for which $\Phi_{\lambda,N}^\gamma(u;Q_0)<\infty$ at a single threshold
$\lambda\in(0,\infty)$ implies $u\in\mathrm{VMO}(Q_0)$.

For $u\in L^1(Q_0)$, define
\begin{equation*}
\|u\|_{\mathrm{BMO}(Q_0)}
:=\sup_{R\subset Q_0}\fint_R|u(x)-u_R|\,dx\quad
\text{and}\quad u_R:=\fint_Ru(x)\,dx,
\end{equation*}
where the supremum is taken over all cubes $R\subset Q_0$.
We say that $u\in\mathrm{BMO}(Q_0)$ if this seminorm is finite.
We say that $u\in\mathrm{VMO}(Q_0)$
if $u\in L^1(Q_0)$ and
$$
\lim_{r\to0^+}\sup_{\genfrac{}{}{0pt}{}{R\subset Q_0}{\ell(R)\leq r}}
\fint_R|u(x)-u_R|\,dx=0,
$$
where the supremum is taken over all cubes $R\subset Q_0$ with $\ell(R)\le r$.
From \cite[Proposition~1]{BrezisNguyenVMO}, we infer that
the following cube versions of the criteria of Brezis and Nguyen
\cite[Theorem~1]{BrezisNguyenVMO} hold for any $u\in L^1(Q_0)$:
\begin{align*}
\Phi_{\lambda,N}^{-N}(u;Q_0)<\infty
\quad\text{for some }\lambda\in(0,\infty)
&\quad\Longrightarrow\quad u\in\mathrm{BMO}(Q_0)
\end{align*}
and
\begin{align*}
\Phi_{\lambda,N}^{-N}(u;Q_0)<\infty
\quad\text{for all }\lambda\in(0,\infty)
&\quad\Longrightarrow\quad u\in\mathrm{VMO}(Q_0).
\end{align*}
This corresponds to the case $\gamma=-N$.
For $\gamma\in(-N,-1]$, any $u\in L^1(Q_0)$ satisfying
$\Phi_{\lambda,N}^{\gamma}(u;Q_0)<\infty$ for some
$\lambda\in(0,\infty)$ belongs to $\mathrm{VMO}(Q_0)$.
The following theorem shows that this range of $\gamma$ is sharp.
\begin{theorem}\label{app:thm:single-threshold-vmo}
Let $N\ge 2$ be an integer and $\gamma\in\mathbb R$.
Then any $u\in L^1(Q_0)$ satisfying
\begin{equation*}
\Phi_{\lambda,N}^\gamma(u;Q_0)<\infty
\quad\text{for some }\lambda\in(0,\infty)
\end{equation*}
belongs to $\mathrm{VMO}(Q_0)$ if and only if
$\gamma\in(-N,-1]$.
\end{theorem}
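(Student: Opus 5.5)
Sufficiency ($\gamma\in(-N,-1]$) comes straight from the Poincaré inequality \eqref{1-1}. Necessity needs one counterexample for each of the ranges $\gamma\le -N$ and $\gamma>-1$.

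\emph{Sufficiency.} Fix $\gamma\in(-N,-1]$ and $\lambda$ with $\Phi_{\lambda,N}^\gamma(u;Q_0)<\infty$. For any cube $R\subset Q_0$ with $\ell(R)\le r$, Hölder's inequality and \eqref{1-1} with $p=N$ (valid by Corollary~\ref{thm:classification}(ii)) give
$$
\left(\fint_R|u-u_R|\,dx\right)^N\le\fint_R\fint_R|u(x)-u(y)|^N\,dx\,dy
\lesssim\lambda^N\ell(R)^{N+\gamma}+\Phi_{\lambda,N}^\gamma(u;R).
$$
The factor $\ell(R)^{p-N}$ equals $1$ here, which is why $p=N$ is the natural exponent. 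The first term tends to $0$ as $r\to0^+$ because $N+\gamma>0$. For the second term, $\Phi_{\lambda,N}^\gamma(u;R)$ is the integral of the fixed integrable function $\lambda^N\mathbf 1_{E}|x-y|^{\gamma-N}$ over $R\times R$. Since $|R\times R|\le r^{2N}$ and $\Phi_{\lambda,N}^\gamma(u;Q_0)<\infty$, absolute continuity of the integral makes $\sup_{\ell(R)\le r}\Phi_{\lambda,N}^\gamma(u;R)\to0$. Hence $u\in\mathrm{VMO}(Q_0)$.

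\emph{Necessity for $\gamma\le-N$.} Here $\beta=1+\gamma/N\le0$. I would reuse the oscillating example from the proof of Theorem~\ref{app:thm:fixed-compactness}, but superpose it across scales so that it fails VMO while every increment stays below the threshold. The simplest choice is $u=a\mathbf 1_A$ with small $a$, where $A$ has density $\tfrac12$ in cubes of arbitrarily small size near a point. For instance, take $A=\{x:\lfloor 2^j x_1\rfloor\text{ even}\}$ intersected with the shell $2^{-j}\le|x-x_0|<2^{1-j}$, one $j$ per shell.

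Then $|u(x)-u(y)|\le a\le\lambda|x-y|^\beta$ because $|x-y|\le\sqrt N$ and $\beta\le0$, so $\Phi=0$. Meanwhile the mean oscillation on cubes of side $2^{-j}$ inside the $j$th shell stays comparable to $a$, so $u\notin\mathrm{VMO}$. Only boundedness of the jump matters, so the construction can be arranged freely. A simpler alternative is $u(x)=a\sin(\log|\log|x-x_0||)$, but the indicator version is cleaner.

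\emph{Necessity for $\gamma>-1$.} The borderline jump set has finite energy, as in the proof of Proposition~\ref{prop:minus-one-sharp-global}. I would take $u=\sum_{k}\mathbf 1_{Q_k}$ over small disjoint cubes $Q_k$ with $\ell(Q_k)\to0$, clustering at a point. Each $\mathbf 1_{Q_k}$ has oscillation on $Q_k$'s double comparable to $1$ at arbitrarily small scale, so $u\notin\mathrm{VMO}$.

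The main obstacle is the energy bound. The interaction estimate $|G_r|\lesssim r\,\ell(Q_k)^{N-1}\cdot r^{N}$ from Proposition~\ref{prop:minus-one-sharp-global}, with the threshold dropped, gives an energy of roughly $\lambda^N\ell(Q_k)^{N-1}\max\{\ell(Q_k)^{\gamma+1},1\}$ for each $Q_k$. For $\gamma>0$ one needs $\sum\ell(Q_k)^{N-1}<\infty$, and for $-1<\gamma\le0$ one needs $\sum\ell(Q_k)^{N+\gamma}<\infty$. Both hold for $\ell(Q_k)=2^{-k}$ because $N\ge2$ (this is exactly where $N\ge2$ enters). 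Cross terms between distinct cubes are handled by subadditivity, since the pairs with $u(x)\ne u(y)$ are covered by the union over $k$ of $Q_k\times Q_k^c$ and its reflection. Finally, placing the cubes in disjoint, well-separated positions keeps $u\in L^1$.
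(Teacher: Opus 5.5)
Your proof is correct. The sufficiency half is the paper's argument. You apply \eqref{1-1} with $p=N$ on a cube $R\subset Q_0$ of side at most $r$, so that $\ell(R)^{p-N}=1$, and note that $\Phi^\gamma_{\lambda,N}(u;R)$ is the integral of a fixed function in $L^1(Q_0\times Q_0)$ over $R\times R$. You then use absolute continuity of the integral on sets of measure at most $r^{2N}$. The paper instead restricts to $|x-y|\le\sqrt N r$ and uses dominated convergence; this is the same point.

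For necessity you work much harder than needed. The paper uses the single function $u=a\mathbf 1_{\{x_1>1/2\}}$ for both ranges. VMO is a uniform condition over all small cubes, so one fixed jump across a hyperplane already gives mean oscillation exactly $a/2$ on every small cube centred on that hyperplane. No superposition of scales or clustering at a point is required.
\begin{itemize}
\item For $\gamma\le-N$, the choice $a\le\lambda(\sqrt N)^\beta$ gives $\Phi^\gamma_{\lambda,N}(u;Q_0)=0$, exactly as in your argument.
\item For $\gamma>-1$, the energy bound is the one-line observation that $|\{x\in Q_0:\ x+h\in Q_0,\ u(x+h)\ne u(x)\}|\le|h_1|$. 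Hence $\Phi^\gamma_{\lambda,N}(u;Q_0)\lesssim\lambda^N\int_0^{\sqrt N}r^\gamma\,dr<\infty$.
\end{itemize}
Your shell and multi-cube constructions do work, but they buy nothing beyond the simple example. Two minor points there: the test cubes in the $j$th shell must be shrunk by a constant factor to actually fit inside it, and your crude per-cube bound $\lesssim\lambda^N\ell(Q_k)^{N-1}$ from the $G_r$ estimate is valid and summable.

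Two side remarks in your write-up are inaccurate.
\begin{itemize}
\item The function $a\sin(\log|\log|x-x_0||)$ does not fail VMO at $x_0$. The singularity of $\log|\log|x-x_0||$ at $x_0$ is of VMO type, with mean oscillation $O(1/|\log r|)$ on cubes of side $r$ near $x_0$, and composition with the Lipschitz map $\sin$ preserves this. You presumably meant $a\sin(\log|x-x_0|)$.
\item $N\ge2$ is not really used in your energy sums. The sharp per-cube bounds $\ell^{N+\gamma}$, $\ell^N(1+|\log\ell|)$, and $\ell^N$ (for $\gamma\in(-1,0)$, $\gamma=0$, and $\gamma>0$, respectively) are summable over $\ell=2^{-k}$ for every $N\ge1$. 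The actual role of $N\ge2$ in the theorem is only to make the range $(-N,-1]$ nonempty.
\end{itemize}
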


\begin{proof}
Assume first that $\gamma\in(-N,-1]$ and
$\Phi_{\lambda,N}^\gamma(u;Q_0)<\infty$ for $u\in L^1(Q_0)$ and some fixed
$\lambda\in(0,\infty)$. For $t\in(0,\infty)$, define
$$
\eta(t):=\lambda^N
\iint\nolimits_{E_{\lambda,\frac{\gamma}{N}}[u;Q_0]\cap\{|x-y|\leq t\}}
|x-y|^{\gamma-N}\,dx\,dy.
$$
By the dominated convergence theorem, we conclude that $\eta(t)\to0$ as $t\to0^+$.
Applying \eqref{1-1} to a cube $R\subset Q_0$
with $\ell(R)\leq r$, we obtain
\begin{align*}
\fint_R|u-u_R|^N\,dx
\lesssim\lambda^N\ell(R)^{N+\gamma}
+\Phi_{\lambda,N}^\gamma(u;R)\leq\lambda^N r^{N+\gamma}+\eta(\sqrt N r),
\end{align*}
where the last step used $N+\gamma\in(0,\infty)$ and
$|x-y|\leq\sqrt N r$ for $x,y\in R$, and where
the implicit positive constant is independent of
$R$ and $r$.
The right-hand side tends to zero independently of $R$, which gives
$u\in\mathrm{VMO}(Q_0)$.

To prove necessity, let $\lambda\in(0,\infty)$ and define
$u(x):=a\mathbf1_{\{x_1>\frac{1}{2}\}}(x)$ for $x:=(x_1,\ldots,x_N)\in Q_0$,
where $a\in(0,\infty)$ will be chosen below. This function does not
belong to $\mathrm{VMO}(Q_0)$ because its mean oscillation on any sufficiently small
cube centered at $\frac{1}{2}(1,\ldots,1)$ equals $\frac{a}{2}$.
If $\gamma\leq-N$, then $\beta:=1+\frac{\gamma}{N}\leq0$.
Choosing $a\in(0,\lambda(\sqrt N)^\beta]$, we obtain
$$
|u(x)-u(y)|\leq a\leq\lambda|x-y|^\beta,
\quad \forall\,x,y\in Q_0,\quad x\ne y,
$$
and hence $\Phi_{\lambda,N}^\gamma(u;Q_0)=0$.

If $\gamma\in(-1,\infty)$, any $a\in(0,\infty)$ gives a
counterexample. Indeed, for a fixed $h:=(h_1,\ldots,h_N)\in\mathbb R^N$, the set of
$x\in Q_0$ satisfying $x+h\in Q_0$ and $u(x+h)\ne u(x)$ has measure
at most $|h_1|$. Thus, by the change of variables $y=x+h$ and polar
coordinates, we obtain
\begin{align*}
\Phi_{\lambda,N}^\gamma(u;Q_0)
&\leq\lambda^N\iint\nolimits_{\{(x,y)\in Q_0^2:\ x\ne y,\ u(x)\ne u(y)\}}
|x-y|^{\gamma-N}\,dx\,dy\\
&\lesssim\lambda^N\int_0^{\sqrt N}r^\gamma\,dr<\infty.
\end{align*}
This completes the proof of Theorem \ref{app:thm:single-threshold-vmo}.
\end{proof}

\smallskip
\noindent\textbf{Acknowledgements}\quad
In the original version of this article, we used different methods
to show that the Poincar\'e inequality \eqref{1-1} holds in the ranges
$\gamma\in[-p,-1]$ and $\gamma\in(-\infty,-p)$.
After reading the article \cite{Frank2026} of Frank, we realized that our
method for $\gamma\in(-\infty,-p)$ could be combined with some ideas
from the proof of \cite[Lemma 2]{Frank2026} to unify our arguments for any $\gamma\in(-\infty,-1]$
and even extend to any $\gamma\in(0,\infty)$.
Developing this observation led us to the complementary truncation difference
estimates and the unified treatment of
$\gamma\in\mathbb R\setminus(-1,0]$ presented in this article.
We are very grateful to Professor Rupert L. Frank for reading our original
manuscript and for his help.

We would also like to mention the preprint
\emph{Lower bounds and Poincar\'e inequality for the Brezis--Seeger--Van Schaftingen--Yung functionals}
by Nicola Picenni \cite{Picenni2026}, in which closely related
Poincar\'e inequalities and $\Gamma$-liminf estimates were obtained
independently and almost simultaneously with our work, using different methods.
We were unaware of this work when the original version of this article was completed.
For $\gamma\in(-\infty,-p)$, Picenni
\cite[Theorems~1.3 and~1.1]{Picenni2026} proved a Poincar\'e inequality
on general bounded convex open sets and a positive $\Gamma$-liminf lower bound
in $L^1_{\mathrm{loc}}(\Omega)$ on any open set $\Omega$.
Our unified argument yields the Poincar\'e inequality on cubes for
$\gamma\in(-\infty,-1]$ and the $\Gamma$-liminf lower bound on any open set for
$\gamma\in\mathbb R\setminus(-1,0]$
(Corollaries~\ref{thm:classification} and~\ref{app:thm:gamma-coercivity}).

Yirui Zhao would like to thank Dr. Xiaosheng Lin for his helpful discussion
and also to thank Dr. Yinqin Li for drawing his attention
to the topic of variants of Poincar\'e inequalities.

The authors acknowledge the use of AI tools during the exploratory stage
of this project. All mathematical arguments and proofs in the final
manuscript were checked and
written by the authors.

\bigskip
\noindent\begin{minipage}{\linewidth}
\noindent Feng Dai
\smallskip

\noindent Department of Mathematical and Statistical Sciences,
University of Alberta, Edmonton, Alberta T6G 2G1, Canada
\smallskip

\noindent\textit{E-mail:} \texttt{fdai@ualberta.ca}
\end{minipage}

\bigskip
\noindent\begin{minipage}{\linewidth}
\noindent Dachun Yang, Wen Yuan and Yirui Zhao
\medskip

\noindent Laboratory of Mathematics and Complex Systems
(Ministry of Education of China), School of Mathematical Sciences,
Institute for Advanced Study, Beijing Normal University,
Beijing 100875, The People's Republic of China
\smallskip

\noindent\textit{E-mails:} \texttt{dcyang@bnu.edu.cn} (D. Yang)

\noindent\phantom{\textit{E-mails:}} \texttt{wenyuan@bnu.edu.cn} (W. Yuan)

\noindent\phantom{\textit{E-mails:}} \texttt{yiruizhao@mail.bnu.edu.cn} (Y. Zhao)
\end{minipage}

\end{document}